\documentclass[11pt]{amsart}
\usepackage{amsmath, amssymb, amsthm}
\usepackage{enumitem}
\usepackage{graphicx}
\usepackage[margin=1.25in]{geometry}
\usepackage{url}
\usepackage{hyperref}

\hypersetup{
    colorlinks=true,
    linkcolor=blue,
    citecolor=blue,
    urlcolor=blue
}

\def\Z{\ensuremath{\mathbb{Z}}}

\def\C{\ensuremath{\mathbb{C}}}

\newcommand{\pa}[1]{\left(#1\right)}
\newcommand{\cpa}[1]{\left\{#1\right\}}

\newcommand{\tn}[1]{\textnormal{#1}}
\newcommand{\br}[1]{\left[#1\right]}

\newcommand{\card}[1]{\left| #1 \right|}

\newcommand{\vol}[1]{\operatorname{vol}\pa{#1}}
\newcommand{\sys}[1]{\operatorname{sys}\pa{#1}}

\newtheorem{theorem}{Theorem}
\newtheorem{lemma}{Lemma}
\newtheorem{proposition}{Proposition}

\newtheorem{corollary}{Corollary}

\title{Mazur's Knot and the Octahedron}

\author{Jack Calcut}
\address{Department of Mathematics, Oberlin College}
\email{jcalcut@oberlin.edu}

\author{Yangyang Du}
\address{Department of Mathematics, University of Michigan}
\email{yyadu@umich.edu}

\subjclass[2020]{
Primary 57K32;
Secondary 57K10, 57K31, 57R65
}

\keywords{
Mazur manifold,
Jester manifold,
contractible 4-manifold,
hyperbolic 3-manifold,
regular ideal octahedron,
thrice-punctured sphere,
systolic geodesic,
Whitehead link}

\date{\today}

\begin{document}

\begin{abstract}
Mazur's knot exterior admits a geometric description
using a single regular ideal octahedron.
The resulting hyperbolic structure is closely related to the Whitehead
link exterior through Adams' theorem on thrice-punctured spheres.
The same octahedral framework applies to the family of Jester manifolds
introduced by Sparks.
Using hyperbolic geometry, Thurston's hyperbolic Dehn filling theorem,
and Mostow--Prasad rigidity, we prove that Mazur and Jester boundary
3-manifolds are pairwise distinct up to finite ambiguity.
Using recent results on systolic geodesics, we remove the remaining finite
ambiguity and prove that the boundaries of all Mazur and Jester manifolds
are pairwise nonhomeomorphic, regardless of orientation.
Consequently, the corresponding compact, contractible $4$-manifolds are
pairwise nonhomeomorphic.
\end{abstract}

\maketitle

\section{Introduction}\label{sec:introduction}

Compact, contractible manifolds play a central role in manifold topology.
A natural question is whether every such manifold is trivial---meaning
homeomorphic to the disk $D^n$.
That holds in low dimensions: it is classical for $n \leq 2$,
and in dimension $3$ it follows from Perelman's 2003 proof of the
Poincar\'{e} conjecture.
In contrast, around 1948 Newman~\cite{newman} constructed nontrivial examples
in every dimension $n \geq 5$.
A decade later, Mazur~\cite{mazur} and Po\'{e}naru~\cite{poenaru}
independently discovered the first such examples in dimension $4$.
Those examples and their boundaries reflect the subtlety of dimension $4$.

\begin{figure}[htbp]
    \centering
    \begin{minipage}{0.4\textwidth}
        \centering
        \includegraphics[scale=0.8]{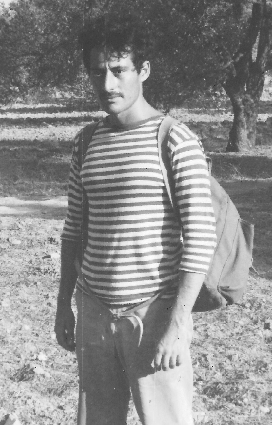}
    \end{minipage}
\hspace{0.01\textwidth}
    \begin{minipage}{0.4\textwidth}
        \centering
        \includegraphics[scale=1.1]{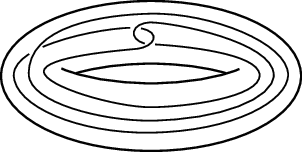}
    \end{minipage}
    \caption{Barry Mazur, Crete, 1964, photo courtesy of Gretchen Mazur,
    and Mazur's knot $K$ in $S^1\times D^2\subset S^1\times S^2$.}
    \label{fig:mazur_combined}
\end{figure}

Mazur's construction is based on the knot $K\subset S^1\times S^2$
shown in Figure~\ref{fig:mazur_combined}.
Using this knot, Mazur constructed a sequence $M_k$,
indexed by $k\in\Z$,
of smooth, compact $4$-manifolds with boundary $\partial M_k$.
Mazur showed that each $M_k$ is contractible and that
$\pi_1\pa{\partial M_{-3}}$ surjects onto the
$(2,5,7)$ hyperbolic triangle group.
Thus, $M_{-3}$ is a nontrivial compact, contractible $4$-manifold.
Two immediate questions arise:
are all of Mazur's $4$-manifolds nontrivial, and
are they pairwise nonhomeomorphic?

Laudenbach~\cite{laudenbach} answered the first question affirmatively in 1979
by showing that every $\partial M_k$ is not simply connected
using Dehn's Lemma and the Loop Theorem.
Akbulut and Kirby~\cite[p.~263]{ak} noted that Laudenbach and Eaton
independently proved that result;
however, Eaton’s proof is unknown,
and Eaton passed away in 2004.

Our focus is on the second question.
Partial progress comes from studying fundamental groups.
Using the computational algebra system MAGMA,
we verified that the groups $\pi_1 \pa{\partial M_k}$
are pairwise nonisomorphic for $\card{k} \leq 2000$
by comparing the abelianizations of their low-index subgroups
of index at most $15$.

Rather than relying on such black-box calculations, we give a geometric
description of the exterior $X$ of Mazur’s knot in $S^1\times S^2$
using a single regular ideal octahedron.
Our starting point is the observation that $X$ contains a naturally
occurring thrice-punctured sphere $\Sigma$.
Cutting $X$ along $\Sigma$ and regluing along the two copies of
$\Sigma$ yields the exterior $W$ of the Whitehead link in $S^3$.

Riley~\cite[pp.~127–128]{riley79} discovered the hyperbolic structure on $W$ in the 1970s;
see also his retrospective account~\cite{riley13} for historical context
and Wielenberg~\cite[\S~6]{wielenberg}.
Thurston~\cite[\S~3.3]{thurston80} gave an explicit geometric construction by
gluing the faces of the regular ideal octahedron;
see also Ratcliffe~\cite[\S\S~10.3,~10.6]{ratcliffe}.

Using a theorem of Adams~\cite{adams}, we transfer the hyperbolic structure
on $W$ to $X$, obtaining
\[
\vol{X}=\vol{W}=3.66386\ldots
\]
which is the volume of the regular ideal octahedron.
That gives a direct geometric proof that the
thrice-punctured sphere $\Sigma\subset X$ is totally geodesic,
that $K$ is not equivalent to $S^1\times\cpa{*}$
by any homeomorphism of $S^1\times S^2$,
and determines exactly the cusp shape of $X$.
The same geometric framework also applies to the closely related
sequence of Jester $4$-manifolds introduced by Sparks~\cite{sparks}.
This framework leads to the following theorem, which is our main result.

\begin{theorem}\label{maintheorem}
The boundaries of all Mazur and Jester $4$-manifolds are pairwise nonhomeomorphic,
regardless of orientation.
Consequently, the corresponding smooth, compact, contractible $4$-manifolds
are pairwise nonhomeomorphic.
\end{theorem}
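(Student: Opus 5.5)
We view each boundary $\partial M_k$ (and each Jester boundary) as a Dehn filling of a single cusped hyperbolic $3$-manifold, and then separate the fillings geometrically. Mazur's manifold $M_k$ is obtained from $S^1\times D^3$ by attaching a $2$-handle along $K\subset S^1\times S^2$ with framing $k$. Hence $\partial M_k = X(\alpha_k)$ is the Dehn filling of the knot exterior $X$ along a slope $\alpha_k$, where $\alpha_k$ meets the meridian once and the family $\alpha_k$, $k\in\Z$, is an infinite sequence of distinct slopes. The Jester boundaries are likewise integral fillings of the exterior of the Jester knot, which we would also describe with a single regular ideal octahedron. Using the octahedral structure transferred from $W$ by Adams' theorem, we know the cusp shape of $X$ exactly, and so we can compute the normalized length $\widehat{L}(\alpha_k)$ of every filling slope explicitly. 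This length grows linearly in $|k|$.

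\textbf{Step 1: all but finitely many fillings are hyperbolic.} By Thurston's hyperbolic Dehn filling theorem, and more quantitatively by the $6$-theorem or the Hodgson--Kerckhoff bounds using the explicit cusp shape, every $X(\alpha_k)$ is hyperbolic once $|k|$ exceeds an explicit bound. Volumes satisfy $\vol{X(\alpha_k)}<\vol{X}$ and converge to $\vol{X}$. Both Mazur and Jester exteriors have volume $3.66386\ldots$, so volume alone does not separate the two families.

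\textbf{Step 2: reduce to finitely many coincidences.} The core of the filling solid torus is the unique shortest geodesic once $|k|$ is large, and its length $\ell_k$ is asymptotic to $2\pi/\widehat{L}(\alpha_k)^2$. Suppose $X(\alpha_k)\cong X(\alpha_{k'})$ or $X(\alpha_k)\cong X'(\beta_{k'})$, with either orientation. By Mostow--Prasad rigidity the homeomorphism is an isometry, so it carries the shortest geodesic to the shortest geodesic. Drilling that geodesic gives an isometry of the cusped manifolds $X$ or $X'$ that carries slope to slope. The symmetry group of these octahedral manifolds is finite and computable. We would check that it cannot send $\alpha_k$ to $\alpha_{k'}$ for $k\neq k'$, using lengths on the cusp torus and the fact that the meridian must go to the meridian, since $H_1$ of the knot exterior has a preferred generator. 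We would also check that the Mazur and Jester exteriors are not isometric, for instance via their cusp shapes. This settles all pairs with $|k|,|k'|$ large. For the remaining pairs, volume monotonicity and the asymptotics of $\ell_k$ show that only finitely many coincidences can occur.

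\textbf{Step 3: the main obstacle is the finite ambiguity.} Here we need an explicit threshold beyond which the core geodesic is provably the unique systole. For this we would invoke recent effective results on systolic geodesics in Dehn fillings, for example drilling theorems of Futer--Purcell--Schleimer type, which give explicit control once $\widehat{L}$ is at least about $7$–$10$. With the exact cusp shape this reduces to a small explicit list of $k$. For those small $k$, including the nonhyperbolic fillings, we would distinguish the manifolds by direct invariants: $\pi_1$ low-index subgroup abelianizations as in the MAGMA computation, or identification in SnapPy with certified verification. Finally, a homeomorphism between the compact contractible $4$-manifolds restricts to a homeomorphism of their boundaries, so the $4$-manifold statement follows.
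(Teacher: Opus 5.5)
\textbf{Main gap: the finite ambiguity is not actually removed.} Your Step~2 settles pairs in which \emph{both} indices lie beyond the Futer--Purcell--Schleimer threshold ($|k|\ge 19$ for $X$, $|k|\ge 28$ for $Y$). Step~3 then separates the finitely many fillings below the threshold from one another by direct invariants. That still leaves infinitely many \emph{mixed} pairs: a filling with $k$ in the explicit finite list against a filling with $|k'|$ arbitrarily large, in either family. No computation on a finite list of manifolds reaches these pairs, and the tools you cite for them do not work.
\begin{itemize}
\item Volume monotonicity in $k$ is not known. The paper only conjectures it from numerics.
\item Thurston's convergence $\vol{X(\alpha_{k'})}\to\vol{X}$ and the asymptotic $\ell_{k'}\sim 2\pi/\widehat{L}^2$ show that only finitely many coincidences occur. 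They do not give an explicit list of $k'$ to check.
\end{itemize}

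The paper closes this uniformly with the systole itself. For \emph{every} hyperbolic filling below the threshold it computes the systole, verifies that it is unique, and identifies the drilled manifold. The one exception is $Y(-3,1)$, which has exactly two systoles and is singled out by that count.
\begin{itemize}
\item For $X(k,1)$ with $k\notin\{-7,\ldots,0\}$ and $Y(k,1)$ with $k\notin\{-9,\ldots,-3\}$, the unique systole is still the core. So your slope argument handles all of these together with the large ones.
\item The remaining drillings give $m081$, $m011$, $m004$, $m038$, $v3519$, $v3200$, $v2919$, $v2911$, $v3184$, $v3505$. These are pairwise distinct and differ from $X=m137$ and $Y=t12072$. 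An isometry must carry unique systole to unique systole, so none of these fillings can be homeomorphic to any other filling.
\item The four exceptional fillings $X(k,1)$, $k\in\{-5,\ldots,-2\}$, are separated from all hyperbolic ones by hyperbolicity. They are separated from each other by counting epimorphisms onto $A_5,A_6,A_7$.
\end{itemize}
An effective upper bound on core length, compared against certified systoles of the small fillings, could also work. But it would have to be carried out explicitly.

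\textbf{Further errors.}
\begin{itemize}
\item The Jester exterior is not a single octahedron and does not have volume $3.66386\ldots$. $Y$ is assembled from two copies of $X$ cut open along $\Sigma$, so $\vol{Y}=2\vol{X}=7.32772\ldots$. Volume therefore \emph{does} separate the two families up to finite ambiguity: almost all $\partial N_k$ have volume above $\vol{X}$, and every hyperbolic $\partial M_k$ has volume below it. This is exactly how Theorem~\ref{finitemaintheorem} treats the cross-family case.
\item For $Y$, your check via cusp lengths plus preservation of the meridian cannot work. With $\tau_Y=\frac{13}{2}+\frac{9}{2}i$, complex conjugation preserves the cusp lattice, fixes $\mu_J$, and sends $\lambda_J$ to $13\mu_J-\lambda_J$. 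Hence it sends the slope $(k,1)$ to $(-k-13,1)$. These are precisely the pairs with $\vol{\partial N_k}=\vol{\partial N_{-k-13}}$. Only the actual computation that no element of $\operatorname{Isom}(Y)$ (order $4$) induces this reflection, i.e.\ that every self-isometry acts trivially on slopes, rules out $\partial N_k\cong-\partial N_{-k-13}$.
\item For $X$ your length argument does suffice, since $2\operatorname{Re}\tau_X=13/2\notin\Z$.
\item The meridian is preserved because it spans the kernel of $H_1(\partial X)\to H_1(X)$ (it is homologous to $\partial\Sigma$). The reason is not that $H_1(X)$ has a preferred generator.
\end{itemize}
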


Since Mazur's original construction, many authors have introduced
broader families of Mazur-type manifolds.
Among the earliest and most influential are the generalizations
introduced by Akbulut and Kirby~\cite{ak} and developed further
by Akbulut~\cite[Ch.~10]{ak16}, leading to corks and
numerous applications in smooth $4$-manifold topology.
In this paper, however, we return to Mazur's original sequence of compact
contractible $4$-manifolds, introduced in 1961, together with the closely
related Jester manifolds introduced by Sparks in 2018.

Our proofs are fundamentally geometric.
Theorem~\ref{finitemaintheorem} establishes Theorem~\ref{maintheorem}
up to finite ambiguity using the octahedral framework together with
Adams' theorem, Thurston's hyperbolic Dehn surgery theorem,
and Mostow--Prasad rigidity.
The remaining finite ambiguity is resolved using recent results
on systolic geodesics and explicit computations.
Both theorems answer affirmatively the two questions of
Sparks~\cite[Que.~3.5]{sparks} asking whether there exists a Jester manifold
that is not homeomorphic to the $4$-ball and whether there are infinitely
many Jester manifolds.

Throughout, all hyperbolic structures are complete and finite-volume.
A hyperbolic structure on a manifold with boundary
means a hyperbolic structure on its interior.

\section*{Acknowledgment}
The authors are grateful to Nathan Dunfield for invaluable
conversations on SnapPy.

\section{Mazur's Manifolds}\label{sec:mazurmanifolds}

We define Mazur's manifolds and provide some insight into his construction.
Mazur discovered these examples before modern handlebody 
theory had been developed.
We use Akbulut’s dotted circle notation for 1-handles, introduced
in the mid-1970s~\cite[p.~100]{ak77}.
Akbulut's notation provides a particularly effective way to describe
handle decompositions of 4-manifolds.
For further background, see Akbulut~\cite[Ch.~1]{ak16}
and Gompf and Stipsicz~\cite[Chs.~4--5]{gs}.

Begin with the solid torus $S^1\times D^3$.
One attaches a $2$-handle along a
knot $\kappa$ in the boundary $S^1\times S^2$ of $S^1\times D^3$
together with a framing.
In the corresponding surgery diagram in $S^3$,
a Seifert surface determines the $0$-framing,
and the general framing is specified by an integer $k$.
Let $V(\kappa,k)$ denote the resulting smooth, compact $4$-manifold,
and let $a\in\Z$ denote the algebraic winding number of $\kappa$
around the $S^1$-factor.
By van Kampen's theorem, the fundamental group of $V(\kappa,k)$
is isomorphic to $\Z/a\Z$.

For example, if $\kappa=S^1\times\cpa{*}$, then the handle diagram
for $V(\kappa,k)$ is shown in Figure~\ref{fig:Mazurknottorus3}.
\begin{figure}[htbp!]
    \centering
    \includegraphics[scale=1.0]{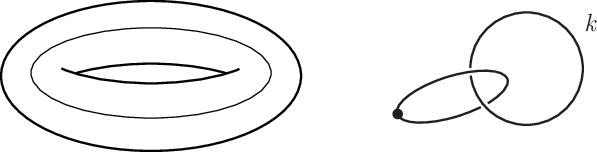}
    \caption{Knot $\kappa=S^1\times\cpa{*}$ in $S^1\times S^2$ (left)
    and handle diagram for $V(\kappa,k)$ (right).}
    \label{fig:Mazurknottorus3}
\end{figure}
Independent of the framing, the $1$- and $2$-handles cancel and
$V(\kappa,k)$ is diffeomorphic to $D^4$.
More generally, if $\kappa$ meets some $2$-sphere $\cpa{*}\times S^2$
transversely and exactly once,
then the $3$-dimensional light bulb theorem---see
Rolfsen~\cite[p.~257]{rolfsen}---shows that $\kappa$ is isotopic
to $S^1\times\cpa{*}$ and $V(\kappa,k)$ is again diffeomorphic to $D^4$.
Thus, geometric winding number one forces triviality.

For knots in $S^1\times S^2$,
it is classical that a knot is isotopic to $S^1\times\cpa{*}$ 
if and only if its exterior has fundamental group $\Z$.
The following lemma determines when $V(\kappa,k)$ is contractible.

\begin{lemma}\label{lem:contractible}
The $4$-manifold $V(\kappa,k)$ is contractible if and only if
the algebraic winding number $a$ of $\kappa$ equals $\pm1$.
In that case, $\partial V(\kappa,k)$ is an integral homology $3$-sphere.
\end{lemma}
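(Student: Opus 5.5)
The plan is to compute the homotopy-theoretic invariants of $V=V(\kappa,k)$ directly from its handle decomposition. Then I will apply Whitehead's theorem to decide contractibility, and use Lefschetz duality for the statement about the boundary.

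First, $V$ has one $0$-handle, one $1$-handle and one $2$-handle, so it is homotopy equivalent to a $2$-complex $S^1\cup_{\kappa} e^2$. The attaching map of the $2$-cell is the class of $\kappa$ in $\pi_1(S^1\times D^3)\cong\Z$, which is $t^a$. As already noted, van Kampen gives $\pi_1(V)\cong\Z/a\Z$. The cellular chain complex is
\[
0\to \Z \xrightarrow{\,a\,} \Z \xrightarrow{\,0\,} \Z \to 0,
\]
so $H_1(V)\cong\Z/a\Z$, and $H_2(V)$ is $0$ when $a\neq0$ and $\Z$ when $a=0$.

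\emph{Only if.} If $V$ is contractible, then $\pi_1(V)=\Z/a\Z$ is trivial, so $a=\pm1$.

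\emph{If.} Suppose $a=\pm1$. Then $\pi_1(V)=1$, $H_1(V)=0$ and $H_2(V)=0$. All higher homology vanishes because the complex is $2$-dimensional. By the Hurewicz theorem, the inclusion of a point induces isomorphisms on all homotopy groups. By Whitehead's theorem, applied to the CW complex homotopy equivalent to $V$, it follows that $V$ is contractible. The only subtle point is identifying $V$ up to homotopy with this $2$-complex, which is standard for handlebodies since each $k$-handle deformation retracts onto its core.

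\emph{Boundary.} For the last claim, take $V$ contractible and compact of dimension $4$. Lefschetz duality gives $H_q(V,\partial V)\cong H^{4-q}(V)$, which is $\Z$ for $q=4$ and $0$ otherwise. Now use the long exact sequence of the pair $(V,\partial V)$. Since $\tilde H_*(V)=0$, we get $H_q(\partial V)\cong H_{q+1}(V,\partial V)$ for $q\ge1$. Hence $H_3(\partial V)\cong\Z$ and $H_1(\partial V)=H_2(\partial V)=0$. Together with $H_0(\partial V)=\Z$ (connectedness, which also follows from the sequence), this shows $\partial V$ is a closed orientable $3$-manifold with the integral homology of $S^3$.

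Alternatively, one can compute $H_1(\partial V)$ directly from the surgery description. The linking matrix of the dotted circle and $\kappa$, with the $1$-handle treated as a $0$-framed surgery, is $\begin{pmatrix}0&a\\ a&k\end{pmatrix}$, whose determinant is $-a^2$. So $H_1(\partial V)=0$ exactly when $a=\pm1$, giving a consistency check.

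I do not expect a real obstacle. The most delicate step is the homotopy identification of $V$ with $S^1\cup_\kappa e^2$, together with noting that the framing $k$ plays no role in the homotopy type.
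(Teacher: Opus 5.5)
Your proof is correct, but it reaches contractibility by a different route than the paper.

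You identify $V(\kappa,k)$ up to homotopy with the $2$-complex $S^1\cup_{a}e^2$ and conclude via cellular homology, Hurewicz, and Whitehead. For $a=\pm1$ you could even skip Whitehead: the attaching map is then homotopic to a homeomorphism $S^1\to S^1$, so $S^1\cup_{\pm1}e^2\simeq D^2$.

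The paper instead proves Mazur's stronger observation that $V(\kappa,k)\times[0,1]$ is diffeomorphic to $D^5$. There the $5$-dimensional $2$-handle is attached along $\kappa\times\{0\}$ in the $4$-dimensional boundary of $S^1\times D^3\times[0,1]$. In that boundary, the homotopy to $S^1\times\{*\}\times\{0\}$ guaranteed by $a=\pm1$ is upgraded to an isotopy by general position, so the $1$- and $2$-handles cancel.

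Your argument is purely homotopy-theoretic and needs no geometric-topology input. It also makes transparent that only $a$, and neither the knotting of $\kappa$ nor the framing $k$, affects the homotopy type. The paper's argument costs a general-position step but yields more. For example, the double of $V$, being $\partial(V\times[0,1])$, is $S^4$.

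Your boundary computation, via the long exact sequence of $(V,\partial V)$ and Poincar\'e--Lefschetz duality, is the same as the paper's. Your linking-matrix check, with determinant $-a^2$, is a correct independent confirmation that $H_1(\partial V)=0$ exactly when $a=\pm1$.
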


\begin{proof}
If $V(\kappa,k)$ is contractible, then $V(\kappa,k)$ is simply connected
and $a=\pm1$.
Conversely, suppose $a=\pm1$.
We will prove the stronger fact---observed by Mazur~\cite{mazur}---that $V'=V(\kappa,k)\times[0,1]$ is diffeomorphic to $D^5$.
The $5$-manifold $V'$ is obtained from $S^1\times D^3\times[0,1]$---which
is a $5$-dimensional $0$-handle union a $5$-dimensional $1$-handle---by
adding a $5$-dimensional $2$-handle along $\kappa'=\kappa\times\cpa{0}$.
As $a=\pm1$, $\kappa'$ is homotopic to $S^1\times\cpa{*}\times\cpa{0}$
in the boundary $B$ of $S^1\times D^3\times[0,1]$.
By general position, there is a smoothly immersed cylinder $C$ in $B$
bounded by $\kappa'$ and $S^1\times\cpa{*}\times\cpa{0}$.
As $B$ is $4$-dimensional, $C$ has finitely many double-points in its interior.
Remove those double-points by finitely many finger moves until $C$ becomes embedded.
The embedded cylinder $C$ yields an ambient isotopy in $B$ carrying
$\kappa'$ to $S^1\times\cpa{*}\times\cpa{0}$.
Now, the $1$- and $2$-handles cancel to give $D^5$ as desired.

The long exact homology and cohomology sequences for the pair
$\pa{V(\kappa,k),\partial V(\kappa,k)}$
and Poincar\'{e}-Lefschetz duality imply that $\partial V(\kappa,k)$
is an integral homology $3$-sphere.
\end{proof}

Therefore, if we hope to find a $V(\kappa,k)$ that is contractible
and nontrivial, then we must use a knot $\kappa$ with
algebraic winding number equal to $\pm1$
and geometric winding number not equal to $\pm1$.
Mazur's knot $K$ is one of the simplest possible candidates.
\begin{figure}[htbp!]
    \centering
    \includegraphics[scale=1.0]{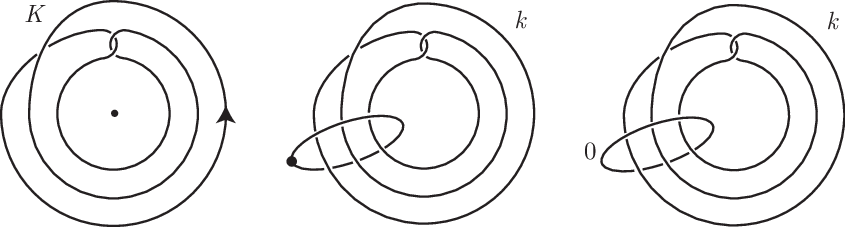}
    \caption{Mazur's knot $K\subset S^1\times S^2$ (left),
    handle diagram for Mazur's $4$-manifold $M_k$ (middle),
    and surgery diagram for the boundary $\partial M_k$ (right).
    Throughout, knots drawn around a central dot are understood to lie in
    $S^1\times D^2\subset S^1\times S^2$.
    Winding around the dot corresponds to the $S^1$-factor.}
    \label{fig:Mazur}
\end{figure}
The handle diagram for Mazur's $4$-manifold $M_k=V(K,k)$, $k\in\Z$,
and the surgery diagram for $\partial M_k$ are shown in
Figure~\ref{fig:Mazur}.

The link in Figure~\ref{fig:Mazur} (right) is symmetric:
there is an ambient isotopy interchanging its two components.
This symmetry was noted by Akbulut and Kirby~\cite[p.~282]{ak}
and illustrated explicitly by Akbulut~\cite[Fig.~4]{ak91}.
It later became the basis for the boundary involutions
used in the theory of corks~\cite[Ch.~10]{ak16}.

We have chosen an orientation of Mazur's knot
indicated in Figure~\ref{fig:Mazur} (left).
This specifies an ordered and oriented meridian--longitude homology basis
$\br{\mu_K,\lambda_K}$ of the boundary $\partial X$ of the exterior $X$
of Mazur's knot.
Namely, $\mu_K$ is the boundary of a $2$-disk that meets $K$ at one
interior point (transversely), and the linking number of $\mu_K$ and $K$ is one.
The longitude $\lambda_K$ is topologically parallel to $K$,
meets $\mu_K$ at one point (transversely),
and has linking number zero with $K$.
The writhe of $K$ is $-3$, so $\lambda_K$ is the blackboard longitude
$\lambda_{BBK}$ together with three positive twists in the $\mu_K$ direction.

By Lemma~\ref{lem:contractible}, each $M_k$ is contractible
with boundary $\partial M_k$ an integral homology $3$-sphere.
By Laudenbach~\cite{laudenbach}, each $\partial M_k$ is not
simply connected.
Therefore, each Mazur manifold $M_k$ is a nontrivial
compact, contractible $4$-manifold.

Akbulut and Kirby~\cite{ak} introduced two two-parameter families of
Mazur-type manifolds, denoted $W^{+}(l,k)$ and $W^{-}(l,k)$.
Their Proposition~1 shows that every manifold in those families is
diffeomorphic to a manifold of the form $W^{+}(l,0)$; see also
Akbulut~\cite[p.~23]{ak16}.
Consequently, their construction yields a single one-parameter family.
Using orientation reversal together with their Proposition~1, we obtain:
\[
M_k \cong -W^{-}(0,-k)
\cong W^{+}(0,k+3)
\cong W^{+}(k+3,0)
\]
where $\cong$ denotes orientation-preserving diffeomorphism.

\section{Mazur's Knot Exterior and the Octahedron}\label{sec:octahedron}

Adams~\cite{adams} proved the following remarkable theorem in the early 1980s.

\begin{theorem}[Adams]
Let $M$ be an orientable finite-volume hyperbolic $3$-manifold,
and let $S\subset M$ be an incompressible thrice-punctured sphere.
Cut $M$ along $S$ and reglue along the resulting copies of $S$
by a homeomorphism that reverses their induced boundary orientations.
Then, the resulting oriented $3$-manifold is hyperbolic
with the same volume as $M$.
\end{theorem}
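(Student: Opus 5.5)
The plan is to prove the statement in three stages. First, realize the cut-and-reglue operation as a change of hyperbolic structure in which the thrice-punctured sphere is totally geodesic. Second, show that the reglued manifold inherits a hyperbolic structure built from the same pieces. Third, read off the volume equality.

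\textbf{Step 1: $S$ is totally geodesic.} Since $S$ is an incompressible thrice-punctured sphere in the finite-volume hyperbolic manifold $M$, its three boundary curves are peripheral. By incompressibility, $\pi_1(S)$ injects as a free group of rank two generated by parabolics, with the product of the two generators also parabolic. Conjugate the corresponding subgroup of $\pi_1(M)\subset PSL_2(\C)$ so that the generators are $z\mapsto z+2$ and $z\mapsto z/(2z+1)$. Any discrete faithful representation of the thrice-punctured sphere group sending all three peripheral elements to parabolics is conjugate to this one, by a direct trace computation. The image is the level-two congruence subgroup, which preserves a hyperbolic plane $P\subset \mathbb{H}^3$ and acts on it with quotient the complete hyperbolic thrice-punctured sphere.

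Next, I would show that the embedded surface $S$ is isotopic to the image of $P$ in $M$. The image of $P$ is a properly immersed totally geodesic surface homotopic to $S$. I would use a cusp and least-area argument, or Adams' argument directly, to show that the immersion is an embedding. It cannot self-intersect, because otherwise lifts of $P$ would cross. Translates of $P$ under $\pi_1(M)$ are then disjoint or equal, and homotopic embedded incompressible surfaces in irreducible $M$ are isotopic (Waldhausen). This embeddedness step is the main obstacle, and it is where I would concentrate the effort.

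\textbf{Step 2: regluing.} Cut $M$ along the totally geodesic $S$ to obtain a manifold $N$ whose boundary consists of two copies $S_\pm$ of $S$. Each $S_\pm$ is a totally geodesic boundary component carrying the unique complete hyperbolic metric on the thrice-punctured sphere. Every self-homeomorphism of the thrice-punctured sphere is isotopic to an isometry, because the mapping class group is realized by the isometry group of order $12$ permuting the cusps. So we may assume the regluing map $\phi\colon S_+\to S_-$ is an isometry.

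Because $\phi$ reverses the induced boundary orientations, the result $M'$ is oriented. Gluing two geodesic boundary faces by an isometry produces a smooth hyperbolic metric, since locally the two half-spaces fit together along a plane. Completeness follows because $N$ is complete and the cusps reassemble into cusps. Near a cusp, $S$ meets the cusp cross-section in geodesic segments, and the regluing by an isometry yields Euclidean tori again.

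\textbf{Step 3: volume and conclusion.} The volume of $M'$ equals $\vol{N}$, which equals $\vol{M}$, since $S$ has measure zero. Thus $M'$ is hyperbolic with the same volume as $M$. If $M'$ has several candidate hyperbolic structures, Mostow--Prasad rigidity makes the volume well defined.
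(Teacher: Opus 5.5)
\paragraph{The gap: embeddedness in Step 1.} Step 1 has a genuine gap, exactly where you flag it. The sentence ``It cannot self-intersect, because otherwise lifts of $P$ would cross'' only restates what must be shown. The translates $gP$, $g\in\pi_1(M)$, are not lifts of $S$. All you actually know is that the lifts $g\tilde S$ of the \emph{embedded} surface are pairwise disjoint.

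The missing idea is to transfer that disjointness to the planes through the sphere at infinity:
\begin{enumerate}
\item Isotope $S$ so that it meets small cusp neighborhoods in straight vertical annuli. Then $\tilde S$ lies at bounded distance from $P$ and accumulates on $\partial\mathbb{H}^3$ exactly along the circle $\partial_\infty P$.
\item Suppose $\partial_\infty P$ and $\partial_\infty(gP)$ crossed. The connected plane $g\tilde S$ is disjoint from $\tilde S$, so it lies in one complementary component of $\tilde S$. Yet it accumulates on points on both sides of $\partial_\infty P$, which is impossible.
\item Circles that do not cross give planes that are disjoint or equal.
\end{enumerate}

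You also need $\operatorname{Stab}(P)=\pi_1(S)$, so that the geodesic immersion does not factor through a proper quotient surface. This holds because $\operatorname{Stab}(P)\subset\pi_1(M)$ acts freely on $P$. Hence $P/\operatorname{Stab}(P)$ is a surface finitely covered by a surface of Euler characteristic $-1$, which forces the covering degree to be one.

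Only after this is the geodesic surface embedded, and only then is your Waldhausen step available (for proper homotopies, applied to compact cores). Falling back on ``Adams' argument directly'' is circular, since this embeddedness is precisely Adams' isotopy theorem.

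\paragraph{Comparison with the paper, and the other steps.} The paper does not prove this theorem. It quotes Adams and remarks that it does not need his isotopy result: in its application the surface is totally geodesic by construction, because the octahedral faces P and L are glued straight across. What the paper actually relies on is therefore your Steps 2 and 3. These are the standard argument and are correct up to two details.

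First, $\phi$ may send a puncture lying in one cusp of $M$ to a puncture lying in a different cusp. This must happen when the two-cusped $W$ is reglued to the one-cusped $X$. You must therefore choose the cusp cross-sections so that all components of $S\cap T$ have equal length. This is possible: within one cusp these components are parallel closed Euclidean geodesics (closed curves, not segments), and heights in different cusps can be adjusted independently. Only then do the flat annuli of $N$ glue to flat tori.

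Second, with generators $z\mapsto z+2$ and $z\mapsto z/(2z+1)$ the product has trace $6$, so it is not parabolic. Your trace computation actually forces $z\mapsto z/(1-2z)$, which generates the same level-two congruence group.
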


Adams also proved that incompressible thrice-punctured spheres
in finite-volume hyperbolic $3$-manifolds are isotopic to
totally geodesic thrice-punctured spheres.
Our approach does not require that part of Adams' work,
since the octahedral structure will identify the totally geodesic
surface directly.
The mapping class group of the thrice-punctured sphere is
the symmetric group $S_3$---see Farb and Margalit~\cite[Prop.~2.3]{fm}.
Thus, there are only finitely many possible regluings.
A key feature of the thrice-punctured sphere is that it
admits a unique complete finite-area hyperbolic structure.

Mazur's knot exterior $X\subset S^1\times S^2$
contains the naturally occurring thrice-punctured sphere
$\Sigma=\pa{\cpa{*}\times S^2}\cap X$
depicted in Figure~\ref{fig:tps}.
\begin{figure}[htbp!]
    \centering
    \includegraphics[scale=1.0]{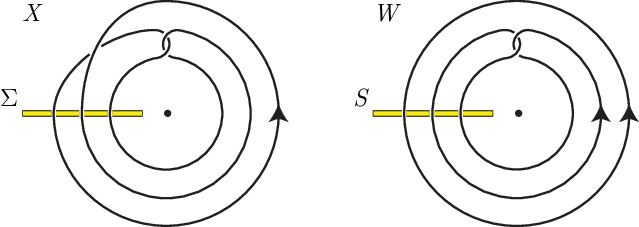}
    \caption{Cutting the knot exterior $X\subset S^1\times S^2$
    along the thrice-punctured sphere $\Sigma$, swapping the two
    leftmost punctures, and regluing yields the link exterior
    $W\subset S^1\times S^2$.}
    \label{fig:tps}
\end{figure}
Cut $X$ along $\Sigma$ yielding two copies
$\Sigma_{+}$ (upper) and $\Sigma_{-}$ (lower) of $\Sigma$,
swap the two leftmost punctures of $\Sigma_{+}$ in Figure~\ref{fig:tps}
by simply isotoping the leftmost puncture
behind the second puncture from the left,
and reglue to obtain the link exterior $W\subset S^1\times S^2$.
The exterior of the outer link component is $S^1\times D^2$.
Shrinking this exterior fiberwise in the $S^2$ factor of $S^1\times S^2$
onto a neighborhood of its core curve
yields the diagram in Figure~\ref{fig:cut1} (left),
namely the Whitehead link exterior in $S^3$.
In particular, $W$ is homeomorphic to the Whitehead link exterior.

\begin{figure}[htbp!]
    \centering
    \includegraphics[scale=1.0]{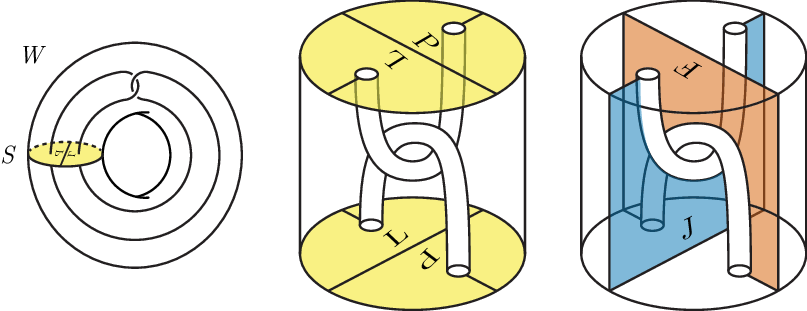}
    \caption{Whitehead link exterior $W$, genus-$2$ handlebody obtained by
    cutting $W$ along the thrice-punctured sphere $S$,
    and two $2$-disks F and J in the handlebody.}
    \label{fig:cut1}
\end{figure}

Cut $W$ along $S$ to obtain the $3$-ball with
two holes---a genus-$2$ handlebody---shown
in Figure~\ref{fig:cut1} (middle).
Cut that handlebody along the two $2$-disks F and J
shown in Figure~\ref{fig:cut1} (right).
The result is the octahedron in Figure~\ref{fig:octahedra} (left).
\begin{figure}[htbp!]
    \centering
    \includegraphics[scale=1.0]{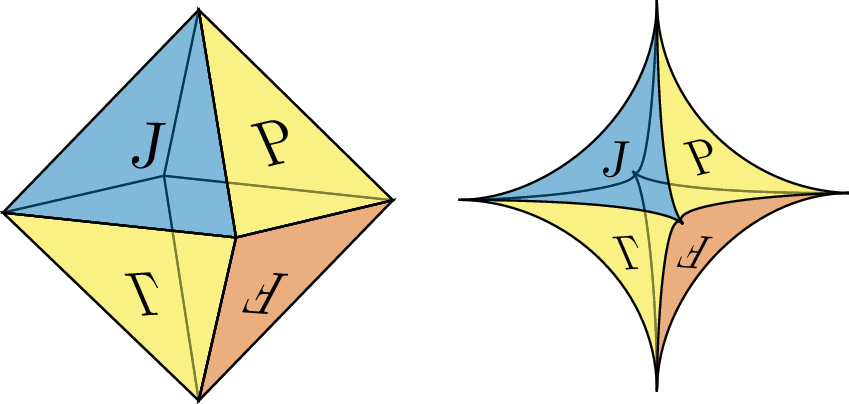}
    \caption{A regular Euclidean octahedron and the regular ideal hyperbolic octahedron.}
    \label{fig:octahedra}
\end{figure}
Reversing this cutting process recovers $W$.
Figure~\ref{fig:OctahedralNet} is the corresponding octahedral net.

\begin{figure}[htbp!]
    \centering
    \includegraphics[scale=0.73]{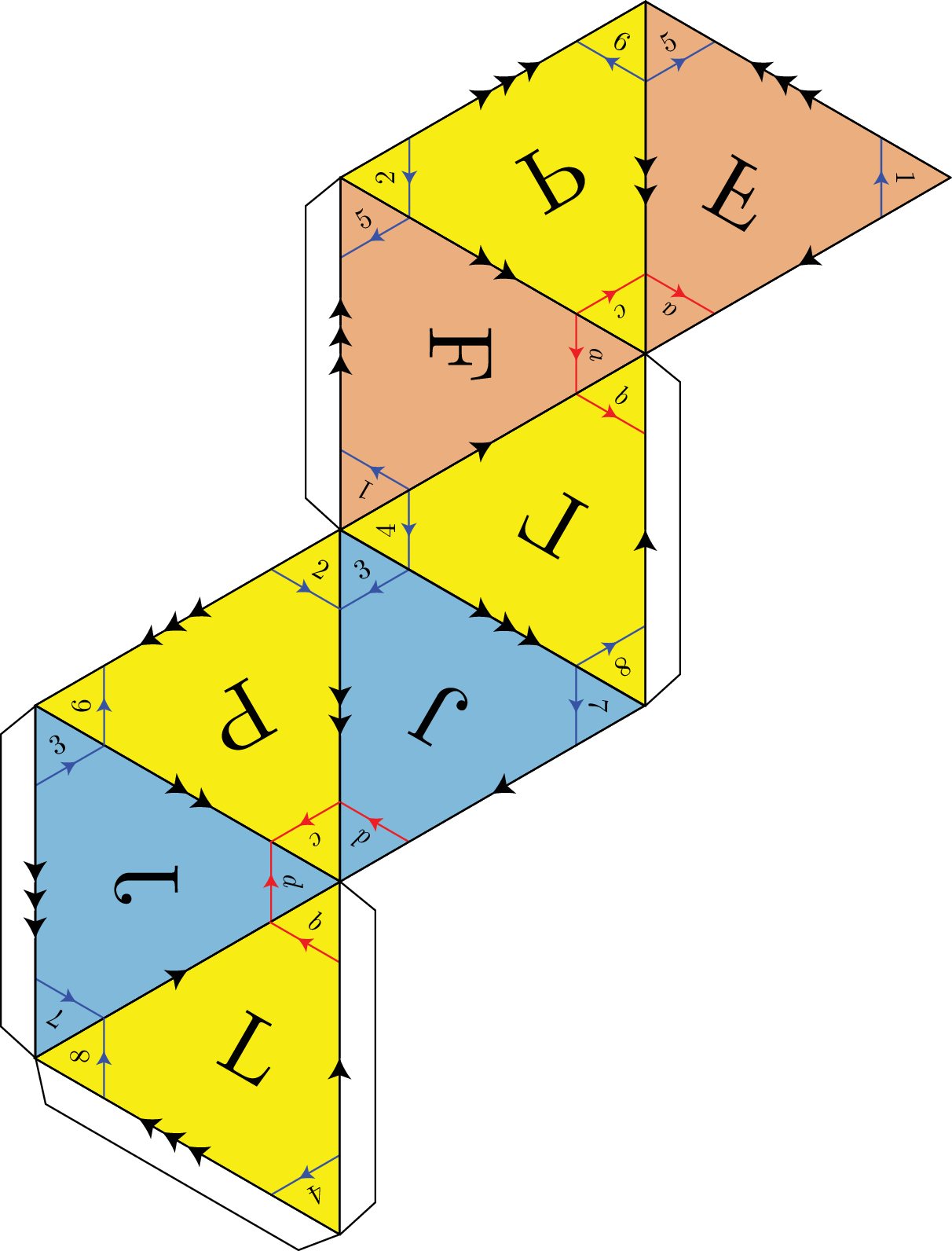}
    \caption{Octahedral net for the Whitehead link exterior $W$.
    Identifying faces with matching labels recovers $W$.
    The yellow faces P and L glue to the
    totally geodesic thrice-punctured sphere $S\subset W$.}
    \label{fig:OctahedralNet}
\end{figure}

Realize the octahedron as the regular ideal octahedron
in Poincar{\'e}'s ball model of
hyperbolic $3$-space---see Figure~\ref{fig:octahedra}.
Since the dihedral angles are $\pi/2$ and four octahedral edges
meet around each edge class in the quotient,
the total angle around each edge is $2\pi$.
Thus, the face pairings determine a hyperbolic structure on $W$.
All faces of the octahedron lie in geodesic planes,
and the faces P and L are glued straight across
the face pairings.
Hence, they descend to a totally geodesic
thrice-punctured sphere $S\subset W$.
As $S$ is totally geodesic in $W$, $S$ is incompressible in $W$.

Applying Adams' theorem now shows that $X$ is hyperbolic
and $\vol{X}=\vol{W}=3.66386\ldots$,
the volume of the regular ideal octahedron.
Since the regluing is performed along the
totally geodesic thrice-punctured sphere $S$,
the image $\Sigma\subset X$
is totally geodesic and hence incompressible in $X$.
Hyperbolicity of $X$ implies that Mazur's knot $K$
is not equivalent to $S^1\times\cpa{*}$
by any homeomorphism of $S^1\times S^2$,
since $S^1\times\cpa{*}$ has exterior $S^1\times D^2$
which is not hyperbolic.
\emph{The regular ideal octahedron completely determines
the hyperbolic geometry of Mazur's knot exterior.}

Goerner~\cite{goerner} classified the orientable octahedral
hyperbolic $3$-manifolds.
There are exactly two obtained from a single regular ideal octahedron:
the Whitehead link exterior and its sibling.
Thus, although Mazur's knot exterior has the same volume as the regular
ideal octahedron, it cannot be obtained by gluing the faces of a single
regular ideal octahedron.
Instead, our octahedral description arises by cutting along a
thrice-punctured sphere and applying Adams' theorem.
Consequently, the hyperbolic volumes of the Whitehead link exterior
and Mazur's knot exterior are determined exactly rather than numerically.

Agol~\cite[\S~3]{agol} observed that the SnapPy census manifold $m137$
is closely related to the Whitehead link exterior.
We sharpen that observation by proving that the exterior
of Mazur's knot is precisely $m137$.

Our octahedral description of the Whitehead link exterior agrees with
Thurston's original construction in his notes~\cite[\S~3.3]{thurston80}.
Thurston's later exposition~\cite[Ex.~3.3.9]{thurston97}
and Ratcliffe's description~\cite[\S~10.3]{ratcliffe}
are obtained by reflecting our entire decomposition,
including the regular ideal octahedron.
This reflects the chirality of the Whitehead link:
those constructions describe the mirror image of its exterior.

We now turn to the closely related Jester manifolds.
Their geometry is governed by the same octahedral framework,
which we will later use to classify all Mazur and Jester manifolds.

\section{Jester Manifolds}\label{sec:jester}

Sparks~\cite{sparks} introduced Jester manifolds in his construction
of $4$-dimensional splitters.
A closed splitter is an $n$-manifold that is the union of two $n$-disks
that meet in an $n$-disk.
An open splitter is defined analogously with $n$-disks
replaced by open $n$-disks.
Ancel and Guilbault~\cite{ag} proved that every compact,
contractible topological manifold of dimension $\geq5$
is a closed splitter.
Interest in open splitters was rekindled by Gabai's proof that
Whitehead's $3$-manifold is an open splitter~\cite{gabai}.
For further background on splitters, see Sparks~\cite[\S\S~1--2]{sparks}.

Ancel, Guilbault, and Sparks asked whether any Mazur
$4$-manifold $M_k$ is a closed splitter.
This intriguing question remains open.
Zeeman observed that each $M_k$ has a dunce hat spine~\cite{z64}.
Ancel and Sparks undermined one strategy for splitting an $M_k$
by showing that the dunce hat does not split into two collapsible
subpolyhedra~\cite{as}.
To avoid that difficulty, Sparks constructed Jester manifolds
with spines that decompose into two collapsible subpolyhedra.

The Jester knot construction mirrors the Mazur knot construction
through a natural two-fold symmetry.
The Jester knot $J\subset S^1\times S^2$
and the corresponding Jester $4$-manifolds $N_k=V(J,k)$,
$k\in\Z$, are shown in Figure~\ref{fig:Jester}.

\begin{figure}[htbp!]
    \centering
    \includegraphics[scale=1.0]{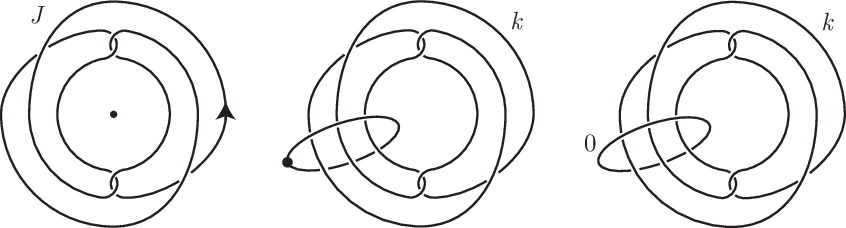}
    \caption{Jester knot $J\subset S^1\times S^2$ (left),
    handle diagram for Jester $4$-manifold $N_k$ (middle),
    and surgery diagram for the boundary $\partial N_k$ (right).}
    \label{fig:Jester}
\end{figure}

We have chosen an orientation of the Jester knot
indicated in Figure~\ref{fig:Jester} (left).
This specifies an ordered and oriented meridian--longitude homology basis
$\br{\mu_J,\lambda_J}$ of the boundary $\partial Y$ of the exterior $Y$
of the Jester knot.
The writhe of $J$ is $-6$, so $\lambda_J$ is the blackboard longitude
$\lambda_{BBJ}$ together with six positive twists in the $\mu_J$ direction.

\begin{figure}[htbp!]
    \centering
    \includegraphics[scale=1.0]{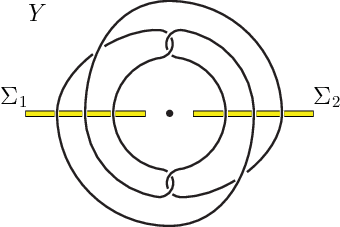}
    \caption{Jester knot exterior $Y\subset S^1\times S^2$ and
    thrice-punctured spheres $\Sigma_1$ and $\Sigma_2$.}
    \label{fig:JesterExteroior}
\end{figure}

Observe that $Y$ is obtained from two copies of $X$
by cutting them open along their copies of $\Sigma$ and
gluing together the resulting two $3$-manifolds
along their four copies of $\Sigma$.
The result is $Y$ containing two copies
$\Sigma_1$ and $\Sigma_2$ of $\Sigma$ as shown in Figure~\ref{fig:JesterExteroior}.

Adams' gluing theorem~\cite[Thm.~4.5]{adams}
shows that $Y$ is hyperbolic and
\[
\vol{Y}=2\vol{X}=7.32772\ldots
\]
which is twice the volume of the regular ideal octahedron.
Thus, the octahedral framework also determines the exact
hyperbolic volume of the Jester knot exterior.
Since the gluings are performed along totally geodesic
thrice-punctured spheres,
the images $\Sigma_1$ and $\Sigma_2$
are totally geodesic and hence incompressible in $Y$.
Hyperbolicity of $Y$ implies that Sparks' Jester knot $J$
is not equivalent to $S^1\times\cpa{*}$ by any homeomorphism of $S^1\times S^2$.

As $J$ is not equivalent to $S^1\times\cpa{*}$,
Laudenbach~\cite{laudenbach} implies that
each $\partial N_k$ is not simply connected.
Lemma~\ref{lem:contractible} implies that each $N_k$
is contractible with homology $3$-sphere boundary.
Therefore, each Jester manifold $N_k$ is
a nontrivial compact, contractible 4-manifold,
answering affirmatively a question of Sparks~\cite[Que.~3.5]{sparks}.

Sparks~\cite[p.~2137]{sparks} also introduced an alternate
Jester knot, which we denote by $J^*$---see Figure~\ref{fig:Jester1}.
We show that $J$ and $J^*$ determine the same $4$-manifolds (up to orientation)
and provide an example of two nonisotopic knots in $S^1\times S^2$
that are nevertheless equivalent by an orientation-preserving diffeomorphism.
That phenomenon does not occur for knots in the $3$-sphere.

\begin{figure}[htbp!]
    \centering
    \includegraphics[scale=0.8]{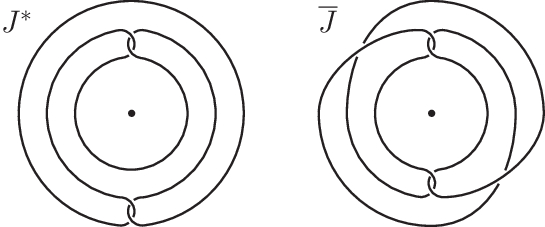}
    \caption{Equivalent but nonisotopic knots in $S^1\times S^2$.}
    \label{fig:Jester1}
\end{figure}

Let $r:S^2\to S^2$ be a reflection,
and let $\overline{J}$ denote the image of $J$ under the
orientation-reversing diffeomorphism $\tn{id}\times r$ of $S^1\times S^2$.
The following proposition shows that $J^*$ and $\overline{J}$
are equivalent but not isotopic.

\begin{proposition}\label{prop:jesterknots}
The knots $J^*$ and $\overline{J}$ in $S^1 \times S^2$ are equivalent
by an orientation-preserving diffeomorphism of $S^1 \times S^2$,
but are not isotopic.
\end{proposition}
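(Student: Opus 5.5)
We prove the two assertions separately: an explicit diffeomorphism gives equivalence, and a homological invariant that isotopy must preserve gives nonisotopy.

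\emph{Equivalence.} The natural candidate is a Dehn twist of $S^1\times S^2$ along the sphere $\cpa{*}\times S^2$, that is, the diffeomorphism
\[
\tau(\theta,x)=\pa{\theta,\rho_\theta(x)},
\]
where $\rho_\theta$ is rotation of $S^2$ by angle $\theta$ about a fixed axis. This map is orientation-preserving. It is the standard diffeomorphism of $S^1\times S^2$ that is not isotopic to the identity; Gluck showed its isotopy class generates the extra $\Z/2$ in the mapping class group. In the diagrams, where the knot lies in $S^1\times D^2$ around the central dot, $\tau$ acts as a full $2\pi$ twist of the strands as they pass once around the $S^1$ direction.

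The plan is as follows. Draw $\overline{J}$ as the mirror of $J$, with all crossings reversed as seen from the dot. Insert one full twist (or one full twist of the appropriate sign) into the strands crossing a meridional disk of the solid torus. Then verify by explicit isotopy in $S^1\times S^2$, namely planar moves together with light-bulb-type moves of strands across the $\cpa{*}\times S^2$ sphere, that the result is $J^*$. If a single twist does not suffice, compose with $\tn{id}\times$ (a rotation) or with $(\theta,x)\mapsto(-\theta,r'(x))$ for a second reflection $r'$; all of these are orientation-preserving. This diagrammatic matching is the main obstacle. My first attempt would be to track a small number of strands (the geometric winding number, presumably $3$ as for Mazur's knot, or more for Jester) through one full twist and then simplify the crossings.

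\emph{Nonisotopy.} Isotopic knots have homotopic (indeed isotopic) framed data, and the twist $\tau$ changes framings. Specifically, $\tau$ carries the Seifert-surface ($0$-)framing of a knot with algebraic winding number $a$ to the framing shifted by $a^2$, which is $\pm1$ here. So one route is to compare the surgery descriptions. An isotopy from $\overline{J}$ to $J^*$ would give $V(\overline{J},k)\cong V(J^*,k)$ with matching framings, whereas the equivalence via $\tau$ gives $V(\overline{J},k)\cong V(J^*,k\pm1)$, or with some other nonzero shift. Combining the two would yield $\partial V(J^*,k)\cong\partial V(J^*,k\pm1)$ for all $k$. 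This would contradict hyperbolic Dehn filling distinctness for large $\card{k}$: by Thurston's theorem, the volumes of large fillings are distinct and strictly increase toward $\vol{Y}$. The manifolds $\partial V(J^*,k)$, $k\in\Z$, would then form a single homeomorphism class, so only finitely many volumes could occur, which is impossible.

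A cleaner alternative, if available, is to work directly with the exteriors, which are homeomorphic via $\tau$. Isotopic knots admit a homeomorphism of exteriors that extends to $S^1\times S^2$ by a map isotopic to the identity, and such a map sends longitude to longitude with the same framing. By Mostow--Prasad rigidity the mapping class group of the hyperbolic manifold $Y$ is finite. We would therefore check that no symmetry of the exterior composed with $\tau$ preserves the $0$-framing longitude. The framing-shift argument above is the one I would use first.
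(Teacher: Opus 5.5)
Your nonisotopy argument rests on the claim that an isotopy from $\overline{J}$ to $J^*$ carries framings to framings with the same diagrammatic coefficient, so that $V(\overline{J},k)\cong V(J^*,k)$. Your ``cleaner alternative'' uses the same claim in the form that a map isotopic to the identity preserves the framing. This is false for knots in $S^1\times S^2$ with nonzero algebraic winding number $a$. A light bulb move is a slide over the $0$-framed unknot of the surgery diagram, and it changes the framing coefficient by $\pm 2a$. Concretely, $\tau^2$ is isotopic to the identity because $\pi_1(SO(3))\cong\Z/2$, yet it preserves $S^1\times\cpa{*}$ (with $*$ on the rotation axis) and shifts its framing by $2$. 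The same issue affects the other side of your comparison. Your equivalence, like the paper's, uses light bulb moves, so its shift is $\pm1$ plus an unknown even integer, not $\pm1$. Thus neither ``an isotopy gives shift $0$'' nor ``the equivalence gives a nonzero shift'' is justified, and $\partial V(J^*,k)\cong\partial V(J^*,k\pm1)$ does not follow.

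The missing idea is parity. Since $a=\pm1$, every isotopy changes the coefficient by an even integer, while a single Gluck twist changes it by $a^2=1$. Suppose $J^*$ were isotopic to $\overline{J}$. Composing the equivalence with the inverse of that isotopy gives an orientation-preserving diffeomorphism $h$ of $S^1\times S^2$ with $h(J^*)=J^*$. This $h$ fixes the meridian slope and sends each $j$-framed longitude slope $\lambda_j$ to $\lambda_{j+s}$ with $s$ odd, hence nonzero. Your volume argument then applies to the fillings along $\lambda_{j+ns}$, $n\in\Z$, which are all homeomorphic. More directly, $h$ acts on $H_1$ of the cusp torus by $\pm\begin{pmatrix}1&s\\0&1\end{pmatrix}$, which has infinite order and contradicts the finiteness of the mapping class group of the hyperbolic exterior.

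So repaired, your route genuinely differs from the paper's. The paper cites Aceto--Bregman--Davis--Park--Ray: odd winding number plus being isotopic to the Gluck-twist image forces geometric winding number $\pm1$. It then uses the light bulb theorem and hyperbolicity of $Y$. You would instead prove the needed hyperbolic case directly from Mostow--Prasad or Thurston. Note that this requires the equivalence to involve an odd number of Gluck twists.

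That equivalence is the other unfinished piece. Your plan matches the paper's, which isotopes $\overline{J}$ until its portion between two spheres $\cpa{*}\times S^2$ is a $3$-strand braid, applies light bulb moves, and then uses one Gluck twist to trivialize the braid and obtain $J^*$. But you leave this diagrammatic verification undone, and it is the entire content of that half.
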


\begin{proof}
Begin with $\overline{J}$ and perform the three isotopies in Figure~\ref{fig:Jester2}.
\begin{figure}[htbp!]
    \centering
    \includegraphics[scale=0.8]{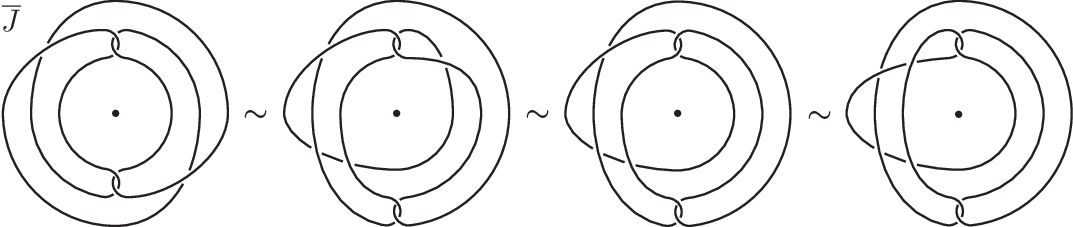}
    \caption{Three isotopies in $S^1\times S^2$.}
    \label{fig:Jester2}
\end{figure}

The resulting knot is shown in Figure~\ref{fig:Jester3} with two $2$-spheres
of the form $\cpa{*}\times S^2$ represented by arcs.
The portion of the knot between those $2$-spheres
determines the three-strand braid shown in Figure~\ref{fig:Jester3}.
\begin{figure}[htbp!]
    \centering
    \includegraphics[scale=0.8]{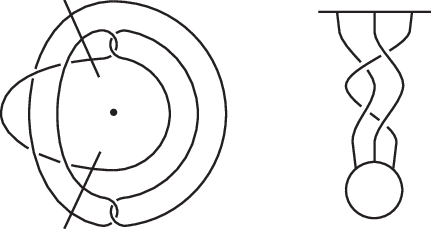}
    \caption{Knot in $S^1\times S^2$ with two $2$-spheres represented by arcs (left)
    and three-strand braid between those two $2$-spheres.}
    \label{fig:Jester3}
\end{figure}

Now perform the three isotopies of $S^1\times S^2$ shown schematically
in Figure~\ref{fig:Jester4}, where only the braid portion is illustrated.
The first and third are light bulb isotopies in which an arc swings under the lower sphere.
Finally, perform a Gluck twist.
Namely, cut $S^1\times S^2$ along the lower sphere in Figure~\ref{fig:Jester4},
rotate one boundary sphere by a complete revolution, and reglue.
The resulting braid is trivial.
\begin{figure}[htbp!]
    \centering
    \includegraphics[scale=0.8]{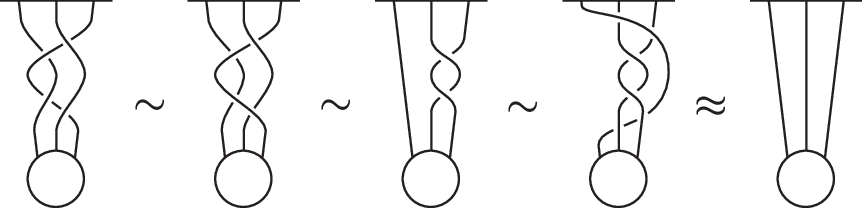}
    \caption{Three isotopies followed by a Gluck twist.}
    \label{fig:Jester4}
\end{figure}

The composition of those isotopies and the Gluck twist is an orientation-preserving diffeomorphism of $S^1\times S^2$ carrying $\overline{J}$ to $J^*$, as desired.

Finally, suppose, by way of contradiction,
that $J^*$ and $\overline{J}$ are isotopic.
Aceto, Bregman, Davis, Park, and Ray~\cite[Thm.~1.5]{aceto}
proved that a knot in $S^1\times S^2$ with odd algebraic winding number
that is isotopic to its image under the Gluck twist must have
geometric winding number $\pm1$.
By the $3$-dimensional light bulb theorem, that implies
$\overline{J}$ is isotopic to $S^1\times\cpa{*}$ with nonhyperbolic exterior.
However, the exterior of $\overline{J}$ is homeomorphic to $Y$, which is hyperbolic.
That contradiction completes the proof.
\end{proof}

Proposition~\ref{prop:jesterknots} provides an orientation-preserving 
diffeomorphism $S^1\times S^2\to S^1\times S^2$ carrying
$J^*$ to $\overline{J}$.
Composing with $\tn{id}\times r$, we obtain an orientation-reversing
diffeomorphism $f:S^1\times S^2\to S^1\times S^2$ carrying
$J^{\ast}$ to $J$.
By construction, $f$ extends to an orientation-reversing
diffeomorphism $F:S^1\times D^3\to S^1\times D^3$.
In fact, every diffeomorphism of $S^1\times S^2$ extends to
$S^1\times D^3$, but we do not need that result.
It follows that $N_k=V(J,k)$ is orientation-reversing diffeomorphic
to $V\pa{J^*,-k}$.
Therefore, the knots $J$ and $J^*$ determine the same
$4$-manifolds up to orientation.

\section{Hyperbolic Dehn Filling}\label{sec:hypdehnfill}

Let $M$ be a compact, orientable $3$-manifold whose boundary is a torus.
Fix an ordered homology basis $\br{\mu,\lambda}$ for $\partial M$.
A slope on $\partial M$ is the isotopy class of an essential simple
closed curve.
With respect to the basis $\br{\mu,\lambda}$, every slope is represented by
a simple closed curve whose homology class is $p\mu+q\lambda$, where
$p,q\in\Z$ with $\gcd(p,q)=1$, unique up to replacing $(p,q)$ by $(-p,-q)$.
We denote this slope by $(p,q)$ or, equivalently, by $p/q$.

The Dehn filled manifold $M(p,q)$ is obtained by gluing a solid torus
$S^1\times D^2$ to $M$ along their boundaries
so that the boundary $\cpa{*}\times\partial D^2$
of a meridional disk is glued to a simple closed curve
representing the slope $(p,q)$.
We refer to the glued-in solid torus as the filling.

In case $M$ is hyperbolic, a slope $(p,q)$ is exceptional provided
the filled manifold $M(p,q)$ fails to be hyperbolic.

Thurston's hyperbolic Dehn surgery theorem is a cornerstone of
$3$-manifold topology.
We record the one-cusped case---see \cite[Thm.~5.8.2]{thurston80}
and~\cite{thurston82}.

\begin{theorem}[Thurston]
Let $M$ be an orientable finite-volume hyperbolic $3$-manifold
with one cusp.
Then, $M$ has at most finitely many exceptional slopes.
Moreover,
\[
\vol{M(p,q)} < \vol{M}
\]
for every hyperbolic filled manifold $M(p,q)$, and
\[
\vol{M(p,q)} \to \vol{M} \quad \tn{as} \quad \card{p}+\card{q}\to\infty.
\]
\end{theorem}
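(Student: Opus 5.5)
The plan follows Thurston's deformation-theoretic approach, organized around an ideal triangulation of $M$. Choose an ideal triangulation of the interior of $M$ into $n$ ideal tetrahedra, for instance one subordinate to the Epstein--Penner canonical decomposition, possibly after subdivision. Each tetrahedron gets a shape parameter $z_j\in\C$ with $\operatorname{Im} z_j>0$. A hyperbolic structure then corresponds to a solution of Thurston's gluing equations, one equation for each edge class. The complete structure gives a point $z^0$ in the resulting algebraic set $\mathcal{D}$. The first step is to show that $\mathcal{D}$ is a smooth complex curve near $z^0$, that is, of complex dimension equal to the number of cusps. An Euler characteristic count shows there are as many edges as tetrahedra, and the edge equations have exactly one redundancy per cusp, so $\dim_{\C}\mathcal{D}\geq1$. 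The matching upper bound comes from local rigidity of the complete structure (Calabi--Weil--Garland, or Mostow--Prasad together with the fact that deformations are detected by the cusp holonomy). I expect this dimension count and smoothness statement to be the main technical obstacle. I would approach it through the holonomy map to $\mathrm{PSL}_2(\C)$-character varieties, showing that the derivative of the map $z\mapsto \log H(\mu)$ is nonzero at $z^0$.

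Next, on $\mathcal{D}$ near $z^0$ consider the logarithmic holonomies $u=\log H(\mu)$ and $v=\log H(\lambda)$, with $u(z^0)=v(z^0)=0$. The key analytic fact is that $u$ is a local holomorphic coordinate and that $v=\tau u+O(u^3)$, where $\tau$ is the cusp shape of $M$; since $\tau$ is the modulus of a Euclidean torus, $\operatorname{Im}\tau\neq0$. Define the generalized Dehn surgery coefficient $(p,q)\in\mathbb{R}^2$ by
\[
pu+qv=2\pi i .
\]
Because $\operatorname{Im}\tau\neq0$, this equation has a unique real solution whenever $u\neq0$, and the map $z\mapsto(p,q)$ is a homeomorphism from a punctured neighborhood of $z^0$ onto a neighborhood of $\infty$ in $\mathbb{R}^2\cup\cpa{\infty}$. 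The positivity conditions $\operatorname{Im} z_j>0$ are open, so they continue to hold nearby. Hence every coprime pair $(p,q)$ outside some compact set is realized by an incomplete structure whose metric completion adds a single geodesic, and this completion is exactly $M(p,q)$ with its complete hyperbolic metric. Only finitely many coprime pairs lie in any compact set, so $M$ has at most finitely many exceptional slopes.

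For the volume statements, volume is a real-analytic function of $z$ on $\mathcal{D}$ (a sum of Bloch--Wigner dilogarithms of the shapes), and this gives $\vol{M(p,q)}\to\vol{M}$ as $\card{p}+\card{q}\to\infty$. For the strict inequality I would use two arguments:
\begin{enumerate}[label=(\roman*)]
\item Along the deformation, Schl\"afli's formula $d\,\mathrm{vol}=-\tfrac12\,\ell\,d\theta$, with cone angle $\theta$ increasing from $0$ to $2\pi$ and core length $\ell>0$, shows that volume strictly decreases for the fillings obtained by deformation.
\item For arbitrary hyperbolic fillings, which may lie outside the deformation neighborhood, I would use Gromov's simplicial volume. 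Since $\vol{N}=v_3\|N\|$, and simplicial volume does not increase under Dehn filling, we get $\vol{M(p,q)}\le\vol{M}$. Strictness then follows from Thurston's argument: in the equality case, straightened fundamental cycles would force the filling core to carry no volume, which is impossible; alternatively one can invoke Gromov--Thurston's strict inequality.
\end{enumerate}
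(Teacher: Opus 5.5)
The paper does not prove this statement. It quotes it from Thurston (\cite[Thm.~5.8.2]{thurston80}, \cite{thurston82}), and Thurston's argument there deforms the holonomy representation rather than the shapes of a fixed ideal triangulation. Your outline follows the Neumann--Zagier triangulation route instead, and as written it has a genuine gap at the first step. You assume the complete structure is realized by an ideal triangulation with $\operatorname{Im} z_j^0>0$ for every $j$. That is not available in general: whether every cusped hyperbolic $3$-manifold admits a positively oriented ideal triangulation is a well-known open problem.

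Subdividing the Epstein--Penner cells into tetrahedra may force you to insert flat tetrahedra wherever the diagonals induced on the two sides of a glued polygonal face disagree. For those tetrahedra $\operatorname{Im} z_j^0=0$, so your sentence ``the positivity conditions are open, so they continue to hold nearby'' fails. Nearby solutions of the gluing equations can then contain negatively oriented tetrahedra. At that point neither the existence of an incomplete hyperbolic structure whose completion is $M(p,q)$ nor the Bloch--Wigner volume formula follows from what you wrote. The smoothness you flagged as the main obstacle is less essential: once you have the dimension count and local rigidity, a nonconstant holomorphic function on a one-dimensional analytic germ is open, and that already yields all large $(p,q)$.

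The gap can be closed in either of two ways:
\begin{enumerate}[label=(\alph*)]
\item as Petronio and Porti did, by carrying the triangulation argument through while allowing flat and negatively oriented tetrahedra;
\item by using gluing solutions only to produce holonomy representations and building the structures as Thurston does. A dimension count on the representation variety and local rigidity make the meridian holonomy an open map near the complete structure. The Ehresmann--Thurston principle then realizes nearby representations on a compact core, which is capped off by the model incomplete cusp end.
\end{enumerate}
With either repair, your remaining steps go through: the real coefficients from $pu+qv=2\pi i$ using $\operatorname{Im}\tau\neq0$, finiteness of exceptional slopes, and continuity of volume.

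For the strict inequality, argument (i) is fine for the fillings produced by the deformation. You should shrink to a neighborhood of $\infty$ of the form $\card{p}+\card{q}>R$, so that the whole cone-manifold path $t(p,q)$, $t\geq1$, stays inside it. Argument (ii), however, only gives $\vol{M(p,q)}\le\vol{M}$. Your equality-case reasoning is not an argument: the core geodesic has measure zero in any case, so ``carrying no volume'' contradicts nothing. The fallback of invoking Gromov--Thurston's strict inequality simply cites the claim being proved. For the finitely many hyperbolic fillings outside the deformation neighborhood, you need a genuine rigidity argument in the equality case, as in \cite[Ch.~6]{thurston80}. Since the paper itself only cites the theorem, citing this part is legitimate, but it is then a citation rather than a proof.
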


The octahedral framework together with Adams' theorems, Thurston's
hyperbolic Dehn surgery theorem, and Mostow--Prasad rigidity
imply the following theorem,
which we regard as Theorem~\ref{maintheorem} up to finite ambiguity.

\begin{theorem}\label{finitemaintheorem}
Each Mazur manifold boundary $\partial M_k$ is homeomorphic
to at most finitely many others.
Each Jester manifold boundary $\partial N_k$ is homeomorphic
to at most finitely many others.
At most finitely many Mazur manifold boundaries are homeomorphic
to any Jester manifold boundary.
\end{theorem}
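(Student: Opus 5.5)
We identify each boundary as a Dehn filling of a hyperbolic one-cusped manifold and then combine Thurston's theorem with Mostow--Prasad rigidity through volume. By construction, $\partial M_k$ is obtained from the knot exterior $X$ by Dehn filling along the framing slope. With respect to $[\mu_K,\lambda_K]$, the $k$-framing (measured against the Seifert longitude) is the slope $(k,1)$, so $\partial M_k\cong X(k,1)$. Likewise $\partial N_k\cong Y(k,1)$ with respect to $[\mu_J,\lambda_J]$. Section~\ref{sec:octahedron} shows $X$ is hyperbolic with one cusp and $\vol{X}=v_8$, the volume of the regular ideal octahedron. Section~\ref{sec:jester} shows $Y$ is hyperbolic with $\vol{Y}=2v_8$. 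The slopes $(k,1)$ are pairwise distinct, and $|k|+1\to\infty$ as $|k|\to\infty$.

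We first apply Thurston's theorem to $X$. Only finitely many slopes are exceptional, so $X(k,1)$ is hyperbolic for all but finitely many $k$. Moreover $\vol{X(k,1)}<v_8$ and $\vol{X(k,1)}\to v_8$ as $|k|\to\infty$. The same argument applied to $Y$ gives $\vol{Y(k,1)}<2v_8$ with $\vol{Y(k,1)}\to 2v_8$.

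Next comes the finiteness step. By Mostow--Prasad rigidity, hyperbolic volume is a homeomorphism invariant, independent of orientation. Fix $k$. Suppose first that $\partial M_k$ is hyperbolic, with volume $V<v_8$. Since $\vol{X(j,1)}\to v_8$, only finitely many $j$ satisfy $\vol{X(j,1)}=V$. Now suppose $\partial M_k$ is not hyperbolic. Then it can be homeomorphic to $\partial M_j$ only if $\partial M_j$ is also nonhyperbolic, and this happens for only finitely many $j$ because there are finitely many exceptional slopes. The identical argument handles $\partial N_k$.

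For the mixed statement, fix $k$ and consider $\partial N_k$. If it is nonhyperbolic, only the finitely many nonhyperbolic $\partial M_j$ can match it. If it is hyperbolic, its volume is a fixed number $V$, and only finitely many $j$ have $\vol{X(j,1)}=V$.

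The main obstacle is the volume-convergence step. Thurston's theorem gives $\vol{X(p,q)}\to\vol{X}$, but the argument also needs every level set of the function $j\mapsto\vol{X(j,1)}$ to be finite. That is exactly what convergence to a limit strictly larger than every value provides, together with the strict inequality $\vol{X(p,q)}<\vol{X}$. A secondary point to check carefully is the framing convention, namely that the surgery coefficient $k$ really corresponds to the slope $(k,1)$ in the stated meridian--longitude basis. The finiteness argument only needs the slopes to be distinct with $|p|+|q|\to\infty$, so a sign or shift error in this identification would not affect it.
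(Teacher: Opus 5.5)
Your treatment of the first two clauses is correct and follows the paper's route. You identify $\partial M_k=X(k,1)$ and $\partial N_k=Y(k,1)$, and use Thurston's theorem to get finitely many exceptional fillings with $\vol{X(k,1)}\to\vol{X}$ from below. You conclude that every volume level set is finite and then invoke Mostow--Prasad. You are in fact more careful than the paper in explicitly handling the nonhyperbolic fillings.

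The third clause is where your argument falls short. You prove only that each fixed $\partial N_k$ is homeomorphic to finitely many $\partial M_j$. That pointwise statement needs no new input. If $\partial N_k\cong\partial M_j$, then every $\partial M_{j'}\cong\partial N_k$ is homeomorphic to $\partial M_j$, so the first clause already gives it. Your argument never uses $\vol{Y}=2v_8$, even though you recorded it.

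The clause is meant uniformly: only finitely many Mazur boundaries are homeomorphic to \emph{any} Jester boundary at all. This is what the paper proves, and it uses exactly the volume gap. Since $\vol{Y(k,1)}\to 2v_8>v_8$, all but finitely many $\partial N_k$ are hyperbolic with volume exceeding $v_8$. Every hyperbolic $\partial M_j$ has volume below $v_8$, and a hyperbolic manifold cannot be homeomorphic to a nonhyperbolic one. So only the finitely many $k$ for which $\partial N_k$ is nonhyperbolic or has volume at most $v_8$ can have $\partial N_k$ homeomorphic to some $\partial M_j$. For each such $k$, your level-set argument yields finitely many $j$, and a finite union of finite sets finishes the proof.
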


\begin{proof}
The manifold $\partial M_k$ is the Dehn filled manifold $X(k,1)$.
By Thurston's theorem, only finitely many slopes on $\partial X$ are exceptional.
For the remaining slopes, the filled manifolds $\partial M_k$ are hyperbolic
with volume less than $\vol{X}$,
and satisfy $\vol{\partial M_k}\to\vol{X}$ as $|k|\to\infty$.
Hence, each volume value occurs only finitely many times.
Mostow--Prasad rigidity now implies that each Mazur manifold boundary
is homeomorphic to at most finitely many others.

The same argument applied to $\partial N_k=Y(k,1)$
shows that each Jester manifold boundary is homeomorphic
to at most finitely many others.

Finally, the octahedral framework and Adams' theorems imply that
$\vol{Y}=2\vol{X}$.
By Thurston's theorem, $\vol{\partial N_k}\to\vol{Y}$ as $|k|\to\infty$.
Hence, all but finitely many Jester manifold boundaries
have volume greater than $\vol{X}$,
whereas every hyperbolic Mazur manifold boundary has volume
less than $\vol{X}$.
By Mostow--Prasad rigidity, at most finitely many
Mazur manifold boundaries are homeomorphic to any Jester manifold boundary.
\end{proof}

The proof of Theorem~\ref{finitemaintheorem} is entirely independent
of computer calculations.
In Section~\ref{proofofTheorem1}, we prove Theorem~\ref{maintheorem}
in its entirety using computations with various software
together with recent geometric results.
That requires the cusp shapes of $X$ and $Y$,
which we compute in the next section.

\section{Cusp Shape}\label{sec:cuspshape}

We compute the cusp shapes of the Whitehead link exterior $W$
and the Mazur and Jester knot exteriors $X$ and $Y$
directly from the octahedral framework.
Each cusp torus inherits a Euclidean structure,
unique up to similarity.
Identifying the universal cover of a Euclidean cusp torus
with $\C$, lift the meridian $\mu$
and the longitude $\lambda$ to oriented arcs beginning at $0$.
Their endpoints determine complex numbers,
which, by an abuse of notation,
we again denote by $\mu$ and $\lambda$.
The cusp shape is then
\[
\tau=\frac{\lambda}{\mu}\in\C
\]
which determines the similarity type
of the Euclidean cusp torus.

Throughout, we use the standard meridian--longitude homology bases
$\br{\mu_K,\lambda_K}$ and $\br{\mu_J,\lambda_J}$
introduced in Sections~\ref{sec:mazurmanifolds} and~\ref{sec:jester}.
We have chosen orientations in Figure~\ref{fig:tps} of the two
link components yielding $W$.
In Figure~\ref{fig:cusp1}, the corresponding cusp tori are shown
in red and blue. 
The inner (clasped) component in Figure~\ref{fig:tps} corresponds
to the red cusp torus,
and the outer component corresponds to the blue cusp torus.
This specifies ordered and oriented meridian--longitude homology bases
$\br{\mu_R,\lambda_R}$ and $\br{\mu_B,\lambda_B}$ of the respective cusp tori.
Let $\lambda_{BBR}$ and $\lambda_{BBB}$ denote the corresponding blackboard
longitudes (the first two B's in a subscript of $\lambda$ indicate blackboard).
The writhes of the inner and outer link components are $-2$ and $0$, respectively.
Thus, $\lambda_R=\lambda_{BBR}+2\mu_R$ and $\lambda_B=\lambda_{BBB}$.

In the octahedral framework,
these Euclidean structures are determined
by intersecting the regular ideal octahedron
with small horospheres about its ideal vertices.
The octahedral net in Figure~\ref{fig:OctahedralNet}
contains edge labels for the six resulting Euclidean squares,
four blue and two red.

We first compute the cusp shapes
of the two cusps of the Whitehead link exterior.
We then transfer those cusp tori backwards
along our cutting process in Section~\ref{sec:octahedron}
to compute the cusp shape of $X$.
Finally, we compute the cusp shape of $Y$
using the natural double covering $Y\to X$.

\begin{proposition}
The cusp shape of the Mazur knot exterior $X$ is $\tau_X=\frac{13}{4}+\frac{9}{4}i$.
\end{proposition}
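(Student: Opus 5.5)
The plan is to read the Euclidean structure on the cusp torus of $X$ directly off the six horospherical squares in the octahedral net of Figure~\ref{fig:OctahedralNet}.

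\textbf{Step 1: cusp shapes of $W$.} Truncate each ideal vertex of the regular ideal octahedron by a small horosphere. Since the octahedron is regular, each cross-section is a Euclidean square; normalize it to be a unit square. Using the edge labels in the net, assemble the two red squares into the red cusp torus of $W$ and the four blue squares into the blue cusp torus. Then draw $\mu_R$ and $\lambda_{BBR}$, and $\mu_B$ and $\lambda_{BBB}=\lambda_B$, as concrete polygonal paths across these squares. To get them, trace each curve through the faces F, J, P, L of the cutting process in Figure~\ref{fig:cut1}. Finally apply the framing correction $\lambda_R=\lambda_{BBR}+2\mu_R$. This gives complex numbers $\mu_R,\lambda_R,\mu_B,\lambda_B$ in the developed picture.

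\textbf{Step 2: transfer back to $X$.} The totally geodesic surface $S$ (the image of faces P and L) meets the cusps of $W$ in straight closed Euclidean curves. These run along edges of the squares, namely the edges coming from P and L. Cutting both cusp tori along these curves yields Euclidean annuli. Undoing the puncture swap of Figure~\ref{fig:tps} reglues the annuli into the single cusp torus of $X$, now built from all six unit squares. Because Adams' regluing is an isometry along $S$, these annuli glue isometrically, so the result is a flat torus of area $6$.

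\textbf{Step 3: locate $\mu_K$ and $\lambda_K$.} The meridian $\mu_K$ is a small loop around one strand of $K$. Away from $\Sigma$ it coincides with a meridian of one component of $W$, so it can be read off directly. To find $\lambda_K$, first trace the blackboard longitude $\lambda_{BBK}$. It runs along $K$, crossing $\Sigma$ three times (the geometric winding), so it is a concatenation of three arcs of the red and blue longitudes, cut at the $S$-curves and reglued with the swap. Then add the correction $\lambda_K=\lambda_{BBK}+3\mu_K$ from the writhe $-3$. Finally compute $\tau_X=\lambda_K/\mu_K$.

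\textbf{Checks and main obstacle.} As a consistency check, $\operatorname{Im}(\lambda_K\overline{\mu_K})$ must equal the area $6$. This gives $|\mu_K|^2\cdot\frac94=6$, so $|\mu_K|^2=\frac83$, which fixes the expected length of the meridian. I expect Step 3 to be the main obstacle. It requires keeping orientations consistent and tracking exactly where the arcs of $\lambda_{BBK}$ cross the cut curves on the cusp, so that no twist in the $\mu$ direction is lost during the regluing. Any such slip would shift the real part $\frac{13}{4}$ by a multiple of $\operatorname{Re}$-translation by $1$ in $\tau$. I would cross-check the real part against SnapPy's cusp shape for $m137$.
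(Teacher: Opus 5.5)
Your Step~1 and overall route agree with the paper, but Step~2 contains a false claim, and the answer depends on exactly that point. The annuli do \emph{not} glue isometrically.
\begin{itemize}
\item Since $S$ is totally geodesic, it meets the blue cusp torus of $W$ in a geodesic representative of $\mu_B$, of length $4$. It meets the red cusp torus in two representatives of $\mu_R$, each of length $1$.
\item For the result to have a single cusp, the regluing must match the blue puncture of $S$ with a red one. Otherwise the blue annulus closes up on itself and forms a separate cusp.
\item The regluing is an isometry of the thrice-punctured sphere, so it preserves horocycle lengths. It carries the length-$4$ horocycle at the blue puncture to a length-$4$ horocycle at a red puncture, where the octahedral horocycle has length $1$.
\end{itemize}
Intrinsically: in $X$, $\Sigma$ meets a horospherical cusp torus in three parallel closed Euclidean geodesics, all representing $\mu_K$, so they must have equal length. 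The octahedral horospheres therefore do not match across $\Sigma$. You must enlarge both red annuli by a factor of $4$ before regluing. In units where the squares of $W$ have side $1$, the cusp torus of $X$ then consists of the four blue unit squares and two red squares of side $4$. Its area is $36$, not $6$.

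With this correction, $\mu_K$ has length $4$. The blackboard longitude has three pieces: the blue edges $5$ and $8$ contribute $1+i$, and the red edges $a$ and $d$ contribute $4i$ each after rescaling (each has length $1$ in $W$). This gives $\lambda_{BBK}=1+9i$, $\lambda_K=13+9i$ and $\tau_X=\frac{13}{4}+\frac94 i$, which is what the paper reads off the universal cover of the cusp torus of $X$. Without the rescaling, your Step~3 would give $\lambda_{BBK}=1+3i$ and imaginary part $\frac34$. So the delicate issue is the relative scale of the pieces, which controls the imaginary part. It is not the twist bookkeeping in the real part that you flagged.

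Your consistency check cannot detect this, since it only restates $\operatorname{Im}\tau=\operatorname{area}/|\mu_K|^2$. In fact, if six unit squares met edge to edge, $|\mu_K|^2$ would be an integer. So the value $\frac83$ already shows the six-unit-square model cannot produce the stated $\tau_X$.
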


\begin{proof}
We begin with the cusp shapes for $W$.
Our starting point is Figure~\ref{fig:cusp1} (left),
which is Figure~\ref{fig:cut1} (middle)
with cusp edges indicated.

\begin{figure}[htbp!]
    \centering
    \includegraphics[scale=1.0]{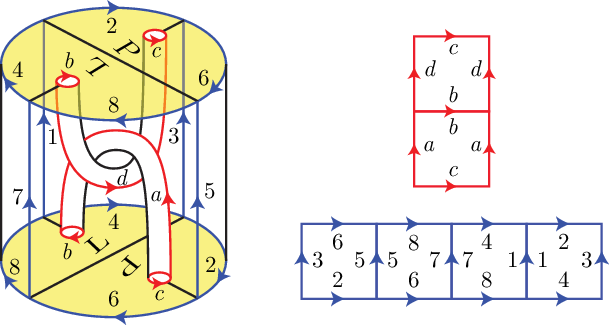}
    \caption{Genus-$2$ handlebody that glues to the Whitehead link exterior $W$ (left).
    Cusp tori of $W$ (right).}
    \label{fig:cusp1}
\end{figure}

For the red cusp, Figure~\ref{fig:cusp1} (left) shows that
$\mu_R$ corresponds to $c$
and $\lambda_{BBR}$ corresponds to traversing edges
$a$ and then $d$.
Figure~\ref{fig:cusp1} (right) now shows that
$\mu_R=1$ and $\lambda_{BBR}=2i$.
Hence, $\lambda_R=\lambda_{BBR}+2\mu_R=2+2i$.
Therefore, $\tau_R={\lambda_R}/{\mu_R}=2+2i$.

For the blue cusp, Figure~\ref{fig:cusp1} (left) shows that
$\mu_B$ corresponds to traversing edges
$6$, $8$, $4$, and then $2$,
and $\lambda_{BBB}$ corresponds to traversing edge
$5$ and then the reverse of $6$ to maintain the blackboard framing.
Figure~\ref{fig:cusp1} (right) now shows that
$\mu_B=4$ and $\lambda_B=\lambda_{BBB}=-1+i$.
Therefore, $\tau_B={\lambda_B}/{\mu_B}=-\frac{1}{4}+\frac{1}{4}i$.

We pause to compare the cusp shapes $\tau_R$ and $\tau_B$ to the
cusp shapes of the exterior of the standard Whitehead link in $S^3$
shown in Figure~\ref{fig:wl}.
\begin{figure}[htbp!]
    \centering
    \includegraphics[scale=1.0]{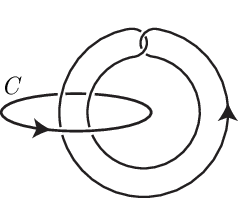}
    \caption{Whitehead link in $S^3$.}
    \label{fig:wl}
\end{figure}
The link orientation in Figure~\ref{fig:wl} specifies an ordered and
oriented meridian--longitude homology basis of each cusp torus.
For those bases, both cusp shapes equal $\tau=2+2i$,
as verified with SnapPy.
We realized $W$ as the exterior of a link in $S^1\times S^2$
and computed cusp shapes using the naturally induced peripheral bases.
There is a canonical homeomorphism $h$ from $W$ in Figure~\ref{fig:cut1} (left)
to the exterior of the Whitehead link in $S^3$ in Figure~\ref{fig:wl}.
It is obtained by noting that $W$ is exactly the exterior of the link
component $C$ in $S^3$.
The homeomorphism $h$ satisfies $h_{*}\pa{\mu_B}=\lambda_C$ and
$h_{*}\pa{\lambda_B}=-\mu_C$.
Therefore:
\[
\tau_C=\frac{\lambda_C}{\mu_C}=-\frac{\mu_B}{\lambda_B}=-\tau_B^{-1}
=-\pa{-\frac{1}{4}+\frac{1}{4}i}^{-1}=2+2i
\]
in agreement with the standard description of the Whitehead link exterior in $S^3$.

We now use these cusp tori to compute the cusp shape of $X$.
Recall the thrice-punctured sphere $S$ in Figure~\ref{fig:cut1} (left).
Cut $W$ along $S$ and let $S_{+}$ (top) and $S_{-}$ (bottom)
be the resulting copies of $S$.

\begin{figure}[htbp!]
    \centering
    \includegraphics[scale=1.0]{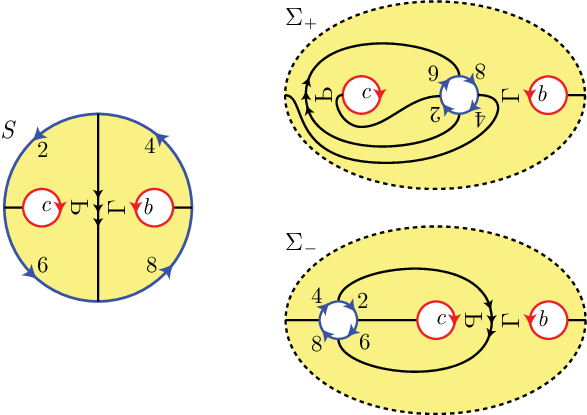}
    \caption{Thrice-punctured sphere $S$ in $W$
    and two copies $\Sigma_{+}$ and $\Sigma_{-}$ of $\Sigma$
    obtained by cutting $X$ along $\Sigma$.}
    \label{fig:cusp2}
\end{figure}

Figure~\ref{fig:cusp2} (left) shows $S$ viewed from above.
Reverse the fiberwise shrinking 
in spheres $\cpa{*}\times S^2$ described in Section~\ref{sec:octahedron}.
The rightmost point of $S$ swings around to the left on the sphere
$\cpa{*}\times S^2$ containing $S$.
Then, isotop $S_{+}$ to swing the blue circle in front of the red 
circle $c$ as in Figure~\ref{fig:cusp2}.
That yields the two thrice-punctured spheres $\Sigma_{+}$ and $\Sigma_{-}$.
We glue them together in the canonical way straight down to obtain $X$.

\begin{figure}[htbp!]
    \centering
    \includegraphics[scale=0.80]{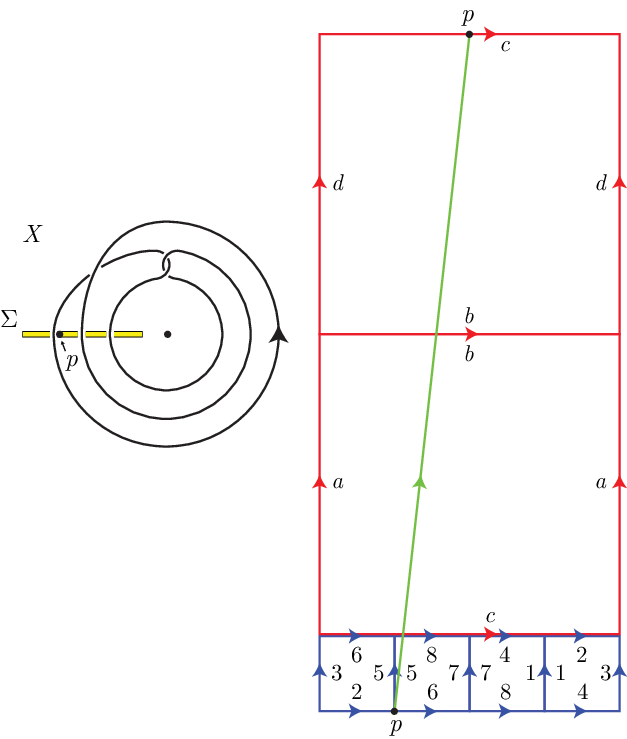}
    \caption{Mazur knot exterior $X$, thrice-punctured sphere $\Sigma$,
    and basepoint $p$ for the blackboard longitude of $K$.
    On the right is the universal cover of the Euclidean cusp torus of $X$
    with a lift of the blackboard longitude shown in green.}
    \label{fig:cusp3}
\end{figure}

Figure~\ref{fig:cusp2} shows that $\mu_K$ corresponds to traversing
edges $6$, $8$, $4$, and then $2$. 
Figure~\ref{fig:cusp3} (right) now yields $\mu_K=4$.
Let the blackboard longitude $\lambda_{BBK}$ of $K$ begin at the point $p$ where
edges $2$ and $6$ meet in $\Sigma_{-}$ as shown in Figure~\ref{fig:cusp3}.
Traverse edge $5$ and then edge $8$ to maintain the blackboard longitude.
Then, traverse a parallel to $a$ and a parallel to $d$.
The lift of the blackboard longitude is shown in Figure~\ref{fig:cusp3} (right)
and yields $\lambda_{BBK}=1+9i$.
Hence, $\lambda_K=\lambda_{BBK}+3\mu_K=13+9i$.
Therefore, $\tau_K={\lambda_K}/{\mu_K}=\frac{13}{4}+\frac{9}{4}i$.
\end{proof}

\begin{proposition}
Let $q:Y\to X$ denote the natural double cover.
Then, $q_{*}\pa{\mu_{J}}=\mu_{K}$ and $q_{*}\pa{\lambda_{J}}=2\lambda_{K}$.
Consequently, $\tau_{Y}=2\tau_{X}$.
\end{proposition}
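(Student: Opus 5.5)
The plan is to make the double cover $q:Y\to X$ explicit, read off its effect on the peripheral torus, and let the Euclidean structures finish the job. By construction in Section~\ref{sec:jester}, $Y$ is obtained by cutting two copies $X_1,X_2$ of $X$ along $\Sigma$ and regluing crosswise: $\Sigma_{+}$ of $X_1$ to $\Sigma_{-}$ of $X_2$, and $\Sigma_{+}$ of $X_2$ to $\Sigma_{-}$ of $X_1$. This is precisely the cyclic double cover of $X$ associated to the homomorphism $\pi_1(X)\to\Z/2$ given by mod-$2$ intersection number with $\Sigma$. Equivalently, it is the restriction to $X$ of the double cover $S^1\times S^2\to S^1\times S^2$ that unwraps the $S^1$-factor, and $J$ is the preimage of $K$. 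I would first verify this identification against Figure~\ref{fig:JesterExteroior}, checking that the preimage of $K$ is connected and is isotopic to the drawn $J$. Connectivity holds because $K$ has algebraic winding number $\pm1$, which is odd.

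Next I would compute the restriction to the cusp torus $\partial Y\to\partial X$. This map is a connected double cover of tori, determined by the homomorphism $H_1(\partial X)\to\Z/2$ given by intersection with $\partial\Sigma$. The meridian $\mu_K$ bounds a disk meeting $K$ once. We may choose that disk disjoint from $\Sigma$, since $\mu_K$ can be taken near a point of $K$ away from the sphere $\{*\}\times S^2$. So $\mu_K$ lifts to a closed curve, and it lifts to a meridian of $J$, giving $q_*(\mu_J)=\mu_K$; the orientation conventions of the two figures have to be matched here. The longitude $\lambda_K$ is parallel to $K$, so it meets $\Sigma$ algebraically $\pm1$ times, an odd number. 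Hence it does not lift, and its preimage is a single curve double covering it, parallel to $J$. This gives $q_*(\lambda_J')=2\lambda_K$ for some parallel $\lambda_J'$ of $J$.

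The main obstacle is checking that $\lambda_J'$ is the \emph{zero-framed} longitude $\lambda_J$, rather than $\lambda_J+m\mu_J$ for some $m$. The lift of the Seifert-framed longitude of $K$ has linking number $0$ with $J$ in the homological sense of $S^1\times S^2$, because the covering pulls back the relevant $2$-chains. I would confirm this concretely with the writhes. The blackboard longitude of $K$ has writhe $-3$, so $\lambda_K=\lambda_{BBK}+3\mu_K$. The lift of the blackboard framing is the blackboard framing of the doubled diagram, whose writhe is $2\cdot(-3)=-6$, matching the stated writhe of $J$. Then $\lambda_J=\lambda_{BBJ}+6\mu_J$, and
\[
q_*(\lambda_J)=q_*(\lambda_{BBJ})+6\mu_K=2\lambda_{BBK}+6\mu_K=2\lambda_K.
\]
This uses $q_*(\mu_J)=\mu_K$ and the fact that the blackboard longitude of $J$ double covers that of $K$.

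Finally, $q$ is a local isometry for the hyperbolic structure on $Y$ pulled back from $X$. By Mostow--Prasad rigidity this is the hyperbolic structure of $Y$, so the cusp torus of $Y$ is Euclidean and locally isometric to that of $X$. Lifting to universal covers identified with $\C$, the complex numbers $\mu_J,\lambda_J$ equal the developing images of $q_*(\mu_J)=\mu_K$ and $q_*(\lambda_J)=2\lambda_K$. Therefore
\[
\tau_Y=\frac{\lambda_J}{\mu_J}=\frac{2\lambda_K}{\mu_K}=2\tau_X.
\]
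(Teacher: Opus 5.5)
Your proposal is correct and is essentially the paper's argument: $q$ restricts to a local Euclidean isometry of cusp tori with $q_*(\mu_J)=\mu_K$, the blackboard longitude of $J$ double covers that of $K$, and the writhes $-3$ and $-6$ turn this into $q_*(\lambda_J)=2\lambda_K$ and hence $\tau_Y=2\tau_X$. Identifying $q$ as the restriction of the unwrapping cover of $S^1\times S^2$ and invoking Mostow--Prasad only make explicit what the paper takes for granted; your remark about ``linking in the homological sense of $S^1\times S^2$'' is loose, since $K$ bounds no $2$-chain there, so it is the writhe computation that actually settles the framing.
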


\begin{proof}
The natural double covering $q:Y\to X$ restricts to a double cover
of cusp tori that is a local Euclidean isometry.
Note that $q_{*}\pa{\mu_J}=\mu_{K}$.
Likewise, the blackboard longitude of $J$ projects
with degree two onto the blackboard
longitude of $K$, so $q_{*}\pa{\lambda_{BBJ}}=2\lambda_{BBK}$.
Since the writhes of $K$ and $J$ are $-3$ and $-6$ respectively,
we have $\lambda_{K}=\lambda_{BBK}+3\mu_{K}$ and
$\lambda_{J}=\lambda_{BBJ}+6\mu_{J}$.
Normalizing the Euclidean cusp tori so that each meridian has length one,
we obtain $\tau_{Y}=2\tau_{X}$.
\end{proof}

\begin{corollary}
The cusp shape of the Jester knot exterior $Y$ is $\tau_Y=\frac{13}{2}+\frac{9}{2}i$.
\end{corollary}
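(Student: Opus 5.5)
This corollary follows by combining the two preceding propositions. The plan is to invoke the proposition on the natural double cover $q:Y\to X$, which gives $\tau_Y=2\tau_X$, and then substitute the value $\tau_X=\frac{13}{4}+\frac{9}{4}i$ from the proposition computing the cusp shape of $X$. This yields
\[
\tau_Y = 2\pa{\frac{13}{4}+\frac{9}{4}i} = \frac{13}{2}+\frac{9}{2}i .
\]

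The only point to check is that the normalization in the double-cover proposition matches the definition of cusp shape. Lift the cusp torus of $Y$ to $\C$ so that $q$ is a local Euclidean isometry, and use the same similarity normalization on both sides. Then $\mu_J$ and $\mu_K$ are represented by the same complex number, so $\mu_J=\mu_K$. On the longitudes, we have
\[
\lambda_J=\lambda_{BBJ}+6\mu_J \mapsto 2\lambda_{BBK}+6\mu_K = 2\lambda_K .
\]
Hence
\[
\tau_Y=\frac{\lambda_J}{\mu_J}=\frac{2\lambda_K}{\mu_K}=2\tau_X .
\]
In concrete terms, with $\mu_K=4$ this gives $\mu_J=4$ and $\lambda_J=2(13+9i)=26+18i$, and the ratio is again $\frac{13}{2}+\frac{9}{2}i$.

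There is no substantial obstacle, since both inputs are established earlier. The one point needing care is that the peripheral bases are the oriented ones fixed by the chosen orientations of $K$ and $J$, so that no sign or inversion ambiguity enters. As an optional independent check, I would compare the result with SnapPy's cusp shape for $Y$ in the basis $\br{\mu_J,\lambda_J}$.
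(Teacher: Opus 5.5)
Your proposal is correct and matches the paper: the corollary follows by substituting $\tau_X=\frac{13}{4}+\frac{9}{4}i$ into the relation $\tau_Y=2\tau_X$ from the double-cover proposition. Your normalization check, $q_*(\mu_J)=\mu_K$ and $q_*(\lambda_J)=2\lambda_{BBK}+6\mu_K=2\lambda_K$ under a local Euclidean isometry of cusp tori, is exactly the content of that proposition's proof.
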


\section{Proof of Theorem~\ref{maintheorem}}
\label{proofofTheorem1}

\noindent\textbf{Code and Data Availability.} 
The computer calculations used in Sections 6--8 were executed
using the open-source software
SageMath 10.7, SnapPy 3.3.2, and GAP 4.14.0.
To ensure reproducibility, the complete code, output, and analysis,
together with the companion paper
\emph{Computations for Mazur's Knot and the Octahedron},
are  publicly available in our dedicated GitHub repository:
\newline
\url{https://github.com/yyadu/Mazur-manifold-computation}.
\newline
We summarize the computations needed for the proof below.
Complete computational details appear in the companion paper.

SnapPy identifies the exteriors $X$ and $Y$ with the
census manifolds $m137$ and $t12072$, respectively.
Under these identifications, there are meridian--longitude
basis changes giving homeomorphisms
\[
X(k,1)\approx m137(k+3,1) \quad \tn{and} \quad Y(k,1)\approx t12072(k+6,1)
\quad \tn{for } k\in\Z.
\]

Dunfield~\cite{dunfield} determined the exceptional slopes for
the SnapPy census manifolds.
The integral exceptional slopes of $X$ are $\cpa{-5,-4,-3,-2}$,
and $Y$ has none.
Using the correspondence
\[
\partial M_k=X(k,1)\approx m137(k+3,1)
\]
Dunfield's census identifies the exceptional Mazur manifold
boundaries listed in Table~\ref{tab:exceptional}.
\begin{table}[ht]
\centering
\begin{tabular}{c|c}
$k$ & $X(k,1)$ \\
\hline
$-5$ & graph manifold \\
$-4$ & $\Sigma(3,4,5)$ \\
$-3$ & $\Sigma(2,5,7)$ \\
$-2$ & $\Sigma(2,3,13)$
\end{tabular}
\caption{The exceptional Mazur manifold boundaries $\partial M_k=X(k,1)$.}
\label{tab:exceptional}
\end{table}
The three Brieskorn sphere identifications agree with those obtained earlier by
Akbulut and Kirby~\cite{ak}---see also Akbulut~\cite[p.~161]{ak16}---via the correspondence $M_k\cong W^{+}(k+3,0)$ discussed in
Section~\ref{sec:mazurmanifolds} above.
The remaining exceptional boundary $X(-5,1)$ is identified by Dunfield
as the graph manifold
\[
SFS[D:(2,1),(3,1)]\cup_m SFS[D:(2,1),(3,2)]
\quad \tn{where} \quad
m=
\begin{bmatrix}
0&1\\
1&0
\end{bmatrix}.
\]

Historically, $X(-3,1)=\partial M_{-3}\approx\Sigma(2,5,7)$ is precisely the
boundary studied by Mazur in his original paper~\cite{mazur}.
He proved that $\pi_1(\partial M_{-3})$ surjects onto the $(2,5,7)$
hyperbolic triangle group, thereby showing that $M_{-3}$ is not
homeomorphic to the $4$-ball.
Akbulut~\cite{ak} later recognized that $\partial M_{-3}$ is the Brieskorn sphere
$\Sigma(2,5,7)$, an insight that arose from comparing presentations of its
fundamental group with presentations arising in the resolution of
complex surface singularities (personal communication).

The exceptional manifolds $X(k,1)$, $k\in\cpa{-5,-4,-3,-2}$,
are distinguished topologically from the hyperbolic
$X(k,1)$ and $Y(k,1)$ by hyperbolicity.
They are distinguished from one another by counting classes
of epimorphisms from their fundamental groups to
alternating groups $A_n$.
Here, two epimorphisms $f_1$ and $f_2$ are equivalent
provided $\phi\circ f_1=f_2$ for some automorphism $\phi$ of $A_n$.
Table~\ref{tab:countepimorphisms} collects the results of this computation.
Consequently, the relevant fundamental groups $\pi_1(X(k,1))$
are pairwise nonisomorphic.
Therefore, the exceptional filled manifolds $X(k,1)$
are pairwise nonhomeomorphic.
Thus, we exclude them from further consideration.

\begin{table}[htb]
\centering
\begin{tabular}{c|ccc}
& $A_5$ & $A_6$ & $A_7$ \\
\hline
$X(-5,1)$ & 1 & 0 & 0 \\
$X(-4,1)$ & 1 & 2 & 5 \\
$X(-3,1)$ & 0 & 0 & 2 \\
$X(-2,1)$ & 0 & 0 & 0
\end{tabular}
\caption{Numbers of classes of epimorphisms $\pi_1(X(k,1)) \twoheadrightarrow A_n$.}
\label{tab:countepimorphisms}
\end{table}

A systolic geodesic in a hyperbolic $3$-manifold
is a shortest closed geodesic.
A powerful technique is to identify the unique
systolic geodesic when it exists,
drill it, and compare the resulting drilled manifolds.

Futer, Purcell, and Schleimer~\cite{fps} proved effective
bounds guaranteeing that the core of the filling is the unique systolic
geodesic for sufficiently long slopes.
Their results utilize the systole and cusp shape of the unfilled manifold.
The verified systoles of $X$ and $Y$ are
\begin{alignat*}{2}
\sys{X}&=\sys{m137} &&=0.50979\ldots \\
\sys{Y}&=\sys{t12072} &&=1.01958\ldots
\end{alignat*}
Since both systoles exceed $0.145$, the hypotheses of
Theorems 3.19 and 3.23 of~\cite{fps}
reduce to requiring only that the normalized slope length be at least $9.97$.
The cusp shapes
\[
\tau_X=\frac{13}{4}+\frac{9}{4}i \quad\tn{and}\quad
\tau_Y=\frac{13}{2}+\frac{9}{2}i
\]
were computed geometrically in Section~\ref{sec:cuspshape}
and independently verified by SnapPy.
Substituting these cusp shapes into the normalized-length
formula yields the symmetric sufficient bounds
$\card{k}\ge19$ for $X$ and $\card{k}\ge28$ for $Y$.
Therefore, by \cite[Thm.~3.23]{fps}, the core of the filling is the
unique systolic geodesic in $X(k,1)$ for $\card{k}\ge19$
and in $Y(k,1)$ for $\card{k}\ge28$.
In those cases, drilling the unique systolic geodesic
recovers $X$ and $Y$ respectively.

The preceding discussion together with the computations
in our companion paper establish these facts.

\begin{itemize}
\item The following hold for $X(k,1)$:
\begin{itemize}[label=$\circ$]
\item For $k\notin\cpa{-7,-6,\ldots,0}$, the core of the filling is the unique
systolic geodesic.
\item For $k\in\cpa{-7,-6,-1,0}$, the systolic geodesic
is not the core of the filling but is unique.
\item For $k\in\cpa{-5,-4,-3,-2}$, $X(k,1)$ is not hyperbolic (exceptional slopes).
\end{itemize}
\item The following hold for $Y(k,1)$:
\begin{itemize}[label=$\circ$]
\item For $k\notin\cpa{-9,-8,\ldots,-3}$, the core of the filling is the unique
systolic geodesic.
\item For $k\in\cpa{-9,-8,\ldots,-4}$, the systolic geodesic
is not the core of the filling but is unique.
\item For $k=-3$, there are exactly two systolic geodesics,
the core of the filling is one of them,
and a self-isometry of $Y(-3,1)$ interchanges them.
\end{itemize}
\item Drilling the unique systolic geodesic from each
hyperbolic filled manifold yields the census manifolds
in Table~\ref{tab:systoledrilling}.
The 12 drilled manifolds listed are pairwise nonisometric.
By Mostow--Prasad rigidity, they are pairwise nonhomeomorphic.

\begin{table}[h!t]
\centering

\begin{minipage}[t]{0.45\textwidth}
\centering
\textbf{$X(k,1)$}

\smallskip
\hrule
\smallskip

\begin{tabular}{c|c}
$k$ & Systole-drilled manifold\\
\hline
$-7$ & $m081$\\
$-6$ & $m011$\\
$-5,\ldots,-2$ & exceptional slopes\\
$-1$ & $m004$\\
$0$ & $m038$\\
otherwise & $m137$
\end{tabular}
\end{minipage}
\hspace{0.25in}
\begin{minipage}[t]{0.45\textwidth}
\centering
\textbf{$Y(k,1)$}

\smallskip
\hrule
\smallskip

\begin{tabular}{c|c}
$k$ & Systole-drilled manifold\\
\hline
$-9$ & $v3519$\\
$-8$ & $v3200$\\
$-7$ & $v2919$\\
$-6$ & $v2911$\\
$-5$ & $v3184$\\
$-4$ & $v3505$\\
$-3$ & not unique\\
otherwise & $t12072$
\end{tabular}
\end{minipage}

\caption{Manifolds obtained by drilling the unique systolic geodesic
in $X(k,1)$ and $Y(k,1)$.}
\label{tab:systoledrilling}
\end{table}

\item Drilling each of the two systolic geodesics of $Y(-3,1)$
yields $t12072$.
\item The one-cusped manifolds $X$ and $Y$ have isometry groups
of orders $2$ and $4$, respectively, and every self-isometry
acts trivially on slopes.
\end{itemize}

Now, suppose two of the hyperbolic manifolds $Z$ and $Z'$ in the collection of
all $X(k,1)$ and $Y(k,1)$ are homeomorphic.
By Mostow--Prasad rigidity, $Z$ and $Z'$ are isometric.
The manifold $Y(-3,1)$ is distinguished as the unique manifold
with more than one systolic geodesic.
So, we may exclude it from further consideration.
Thus, both $Z$ and $Z'$ have a unique systolic geodesic.
The isometry $Z\to Z'$ must send one of those geodesics to the other.
Drilling $Z$ and $Z'$ yields homeomorphic systole-drilled manifolds.
Each systole-drilled manifold besides $m137$ and $t12072$
appears for exactly one parent manifold $X(k,1)$ or $Y(k,1)$.
Therefore, either both $Z$ and $Z'$ belong to the family $X(k,1)$ where $k\notin\cpa{-7,-6,\ldots,0}$ or both belong to the family $Y(k,1)$ where $k\notin\cpa{-9,-8,\ldots,-3}$.

Suppose the Mazur case occurs.
Then, we have an isometry $f:X(k,1)\to X(l,1)$
for some $k,l\notin\cpa{-7,-6,\ldots,0}$,
where one unique systolic geodesic is sent to the other
and both are cores of the fillings.
Systole-drilling yields a homeomorphism $\left.f\right|:X\to X$.
By Mostow--Prasad rigidity, $\left.f\right|$
is homotopic to a self-isometry of $X$.
Therefore, $\left.f\right|$ acts trivially on boundary slopes
since every self-isometry of $X$ acts trivially on boundary slopes.
On the other hand, the homeomorphism $f$ carries a meridional filling
$2$-disk in $X(k,1)$ to a meridional filling $2$-disk in $X(l,1)$.
Hence, the restriction $\left.f\right|:\partial X\to\partial X$
carries the unique slope in $\partial X$ bounding a
meridional filling $2$-disk for $X(k,1)$, namely $(k,1)$, to the
unique slope in $\partial X$ bounding a meridional filling
$2$-disk for $X(l,1)$, namely $(l,1)$.
Hence, $(k,1)=(l,1)$ and $k=l$ as desired.
Thus, the hyperbolic Mazur manifold boundaries are pairwise
nonisometric and nonhomeomorphic.

The proof for the Jester case is nearly identical.
That completes our proof of Theorem~\ref{maintheorem}.

\section{Conclusion}

A single regular ideal octahedron governs the geometry
of the Mazur knot exterior, the Jester knot exterior,
and the classification of the resulting
Mazur and Jester manifolds.
Our philosophy throughout has been to prove as much
as possible using geometry.
The geometry developed here encodes substantially more
information than is needed for Theorem~\ref{maintheorem}.
Representative volumes of Mazur and Jester manifold boundaries
are listed in Table~\ref{tab:volumes}.

\begin{table}[ht]
\centering

\begin{minipage}{0.46\textwidth}
\centering
\begin{tabular}{r|r}
$k$ & $\vol{\partial M_k}$ \\
\hline
$-1000$ & $3.66384\ldots$ \\
$-9$    & $3.08386\ldots$ \\
$-8$    & $2.86563\ldots$ \\
$-7$    & $2.51622\ldots$ \\
$-6$    & $1.91221\ldots$ \\
$-5$    & exceptional \\
$-4$    & exceptional \\
$-3$    & exceptional \\
$-2$    & exceptional \\
$-1$    & $1.39850\ldots$ \\
$0$     & $2.25976\ldots$ \\
$1$     & $2.71245\ldots$ \\
$2$     & $2.98683\ldots$ \\
$1000$  & $3.66384\ldots$
\end{tabular}
\end{minipage}
\hfill
\begin{minipage}{0.46\textwidth}
\centering
\begin{tabular}{r|r@{\qquad}r|r}
$k$ & $\vol{\partial N_k}$ &
$k$ & $\vol{\partial N_k}$ \\
\hline
$-1000$ & $7.32767\ldots$ &
$987$   & $7.32767\ldots$ \\
$-19$   & $7.07621\ldots$ &
$6$     & $7.07621\ldots$ \\
$-18$   & $7.03660\ldots$ &
$5$     & $7.03660\ldots$ \\
$-17$   & $6.98747\ldots$ &
$4$     & $6.98747\ldots$ \\
$-16$   & $6.92577\ldots$ &
$3$     & $6.92577\ldots$ \\
$-15$   & $6.84735\ldots$ &
$2$     & $6.84735\ldots$ \\
$-14$   & $6.74648\ldots$ &
$1$     & $6.74648\ldots$ \\
$-13$   & $6.61541\ldots$ &
$0$     & $6.61541\ldots$ \\
$-12$   & $6.44415\ldots$ &
$-1$    & $6.44415\ldots$ \\
$-11$   & $6.22139\ldots$ &
$-2$    & $6.22139\ldots$ \\
$-10$   & $5.94064\ldots$ &
$-3$    & $5.94064\ldots$ \\
$-9$    & $5.62503\ldots$ &
$-4$    & $5.62503\ldots$ \\
$-8$    & $5.35895\ldots$ &
$-5$    & $5.35895\ldots$ \\
$-7$    & $5.21836\ldots$ &
$-6$    & $5.21836\ldots$
\end{tabular}
\end{minipage}

\caption{Representative volumes of Mazur and Jester manifold boundaries.}
\label{tab:volumes}
\end{table}

Together with computations for all $\card{k}\leq1000$,
Table~\ref{tab:volumes} suggests the following conjectures.
The volumes of Mazur manifold boundaries $\partial M_k$
appear to strictly increase as $k$ increases beginning at $-1$,
and as $k$ decreases beginning at $-6$.
Therefore,
\[
\vol{\partial M_{-1}}=1.39850\ldots
\]
is the minimum volume among all hyperbolic Mazur manifold boundaries.

The Jester manifold boundaries exhibit an apparent symmetry:
\[
\vol{\partial N_k}=\vol{\partial N_{-k-13}}
\]
for all $k\in\mathbb Z$.
The companion computations verify this symmetry numerically for $\card{k}\leq1000$.
We will prove this symmetry in forthcoming work.
Despite this symmetry, Theorem~\ref{maintheorem} shows that
all Jester manifold boundaries are nevertheless pairwise nonhomeomorphic.

The volumes of Jester manifold boundaries $\partial N_k$
appear to strictly increase as $k$ increases beginning at $-6$,
and as $k$ decreases beginning at $-7$.
Therefore,
\[
\vol{\partial N_{-6}}=\vol{\partial N_{-7}}=5.21836\ldots
\]
are the minimum-volume Jester manifold boundaries.
Since every hyperbolic Mazur manifold boundary satisfies
\[
\vol{\partial M_k} < \vol{X}=3.66386\ldots,
\]
strict volume monotonicity will provide an
alternative proof that no Jester manifold boundary is homeomorphic
to any Mazur manifold boundary.

Using Matveev and Polyak~\cite[Thm.~6.3]{mp}, the Casson invariant of every
Mazur manifold boundary is $-2$ and the Casson invariant of every
Jester manifold boundary is $0$.
Thus, the Casson invariant distinguishes Mazur manifold boundaries
from Jester manifold boundaries.

By Akbulut and Karakurt~\cite[Prop.~1.2]{aka} and
Saveliev~\cite{saveliev}, Heegaard Floer homology and instanton
Floer homology are independent of the framing within both the
Mazur and Jester families.
Thus, these Floer-theoretic invariants do not distinguish Mazur
manifold boundaries from one another and do not distinguish Jester
manifold boundaries from one another.

After completion of this work, Lidman brought to our attention
the work of Daemi, Eismeier, and Lidman~\cite[Thm.~1.8]{lidman}.
They show that the Chern--Simons filtration on instanton Floer homology
distinguishes $3$-manifolds within the Mazur and Jester families
up to orientation-preserving homeomorphism.
Since Mazur manifold boundaries have Casson invariant $-2$,
they admit no orientation-reversing homeomorphisms.
Thus, their work also distinguishes Mazur manifold boundaries
up to homeomorphism, regardless of orientation.
For Jester manifold boundaries, however, the Casson invariant vanishes,
so this argument does not extend to the unoriented classification.

In contrast, the geometric framework developed here connects all of these
phenomena to a single object: the regular ideal octahedron.
It provides a unified perspective on Mazur's knot, Jester's knot,
their Dehn fillings, and the resulting contractible $4$-manifolds.

\bibliographystyle{alpha}
\bibliography{References.bib}

\end{document}